\newcounter{minutes}
\newcounter{hours}
\dedicatory{}
\theoremstyle{plain}
\newtheorem{thm}[equation]{Theorem}
\newtheorem{cor}[equation]{Corollary}
\newtheorem{lem}[equation]{Lemma}
\theoremstyle{definition}
\theoremstyle{remark}
\newtheorem{rem}[equation]{Remark}
\newtheorem{nonsec}[equation]{}
\numberwithin{equation}{section}
\newcommand{\beq}{\begin{equation}}
\newcommand{\eeq}{\end{equation}}
\newcommand{\ben}{\begin{enumerate}}
\newcommand{\een}{\end{enumerate}}
\newcommand{\bequu}{\begin{eqnarray*}}
\newcommand{\eequu}{\end{eqnarray*}}
\newcommand{\bequ}{\begin{eqnarray}}
\newcommand{\eequ}{\end{eqnarray}}
\newcommand{\sh}{\,\textnormal{sh}}
\newcommand{\ch}{\,\textnormal{ch}}
\renewcommand{\th}{\,\textnormal{th}}
\begin{document}
\thispagestyle{empty}
\def\thefootnote{}



\title[Hilbert metric and quasiconformal mappings]{Hilbert metric and quasiconformal mappings}

\author[\c{S}. Alt\i nkaya]{\c{S}ahsene Alt\i nkaya}
\address{Department of Mathematics and Statistics,
	University of Turku,\newline FI-20014 Turku, 
	Finland\\
	\url{https://orcid.org/0000-0002-7950-8450}}
\email{sahsene.altinkaya@utu.fi}

\author[M. FUJIMURA]{Masayo FUJIMURA}
\address{Department of Mathematics,
	National Defense Academy of Japan, \newline Yokosuka, Japan \\
	\url{https://orcid.org/0000-0002-5837-8167}}
\email{masayo@nda.ac.jp}

\author[M. Vuorinen]{Matti Vuorinen}
\address{Department of Mathematics and Statistics,
	University of Turku, \newline
	FI-20014 Turku,
	Finland\\ \url{https://orcid.org/0000-0002-1734-8228}} 
\email{vuorinen@utu.fi}

%

\date{}

\begin{abstract}
We prove a functional identity between the Hilbert metric and the visual angle metric
in the unit disk. The proof utilizes the Poincar\'e hyperbolic metric in terms of
which both metrics can be expressed. This identity then yields sharp distortion results
for quasiregular mappings and analytic functions, expressed in terms of the Hilbert
metric. We also prove that Hilbert circles are, in fact,
Euclidean ellipses. The proof makes use of computer algebra methods. In particular,
Gr\"obner bases are used.

\end{abstract}

\keywords{Quasiconformal mappings, M\"obius transformations, hyperbolic geometry, Hilbert metric, Visual angle metric}
\subjclass[2010]{30C62}
\footnote{\c{S}. Alt\i nkaya is supported by the Scientific and Technological Research Council of T\"{u}rkiye (TUBITAK 2219 - International Postdoctoral Research Fellowship Program for Turkish Citizens), Project Number: 1059B192402218.}

\maketitle


\footnotetext{\texttt{{\tiny File:~\jobname .tex, printed: \number\year-%
\number\month-\number\day, \thehours.\ifnum\theminutes<10{0}\fi\theminutes}}}
\makeatletter

\makeatother



\section{Introduction}
In recent years, hyperbolic metrics and metrics  similar to it have
become standard tools of geometric function theory  \cite{dhv,frv,gh,ha,h, hkv}. 
In his work \cite[pp.42-48]{p}, the author comprehensively lists twelve metrics that frequently occur in complex analysis, underscoring their significance in this field. These metrics, often referred to as hyperbolic-type metrics, are generally not M\"obius invariant; however, they are frequently quasi-invariant and differ from the hyperbolic metric at most by a constant factor.

In this paper, we apply these ideas to prove a new functional identity 
for the Hilbert metric. This metric, closely related to the Klein or
Cayley-Klein metrics, is studied in \cite{b2,p,py1,py2,pt,rv}. For all distinct points $ a $ and $ b $ in the unit disk $  \mathbb{B}^2 $, the {\it Hilbert metric} is defined as 
\[
	h_{\mathbb{B}^2}(a, b) = \log \frac{| u- b||a-v |}{|u-a||b-v|} ,
\]
where $  u $, $  v $  are the intersection points of the line through $a$ and $b$ 
and the unit circle $  \partial \mathbb{B}^2 $  ordered in such a way that
 $  \left| u - a \right|  < \left| u - b \right|  $. Another metric we study here
is the {\it visual angle metric} for $ a, b \in \mathbb{B}^2 $  defined by
\[
	v_{\mathbb{B}^2}(a, b) = \sup \left\lbrace  \alpha : \alpha = \measuredangle (a, z, b), z \in \partial   \mathbb{B}^2 \right\rbrace .
\]

It turns out that the visual angle metric provides a geometric interpretation
of the Hilbert metric. In fact, the following functional identity holds.

\begin{thm}  \label{thm11}
Let $a, b \in \mathbb{B}^2$ and $m = d( \left\lbrace 0\right\rbrace, L[a, b] )$
where $L[a,b]$ is the line through $a$ and $b.$ Then
the following functional identity holds
\begin{equation*}
\tan \frac{v_{\mathbb{B}^2}(a,b)}{2} 
= \frac{\sqrt{1+ m }}{\sqrt{1- m }} \th \frac{h_{\mathbb{B}^2}(a,b)}{4}.
\end{equation*}
\end{thm}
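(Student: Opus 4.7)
The plan is to exploit the rotational invariance of both $h_{\mathbb{B}^2}$ and $v_{\mathbb{B}^2}$ to reduce to a normalised configuration and then evaluate each side of the identity in explicit coordinates. Rotating about the origin, I may assume that $L[a,b]$ is the horizontal line $y = m$; writing $k = \sqrt{1-m^2}$, the chord endpoints are $u = (-k,m)$ and $v = (k,m)$, and I place $a = (s_1,m)$ and $b = (s_2,m)$ with $-k < s_1 < s_2 < k$. The Hilbert metric is then read off directly from the definition as
\[
h_{\mathbb{B}^2}(a,b) = \log\frac{(k+s_2)(k-s_1)}{(k+s_1)(k-s_2)},
\]
and the identity $\th(x/2) = (e^{x}-1)/(e^{x}+1)$, applied with $x = h_{\mathbb{B}^2}(a,b)/2$, rewrites $\th(h_{\mathbb{B}^2}(a,b)/4)$ as an explicit algebraic expression in $k$, $s_1$, $s_2$.

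For the visual angle I invoke the inscribed angle theorem: on any circle through $a$ and $b$, the angle $\angle(a,z,b)$ is constant on each arc and determined by the radius. The pencil of circles through $a,b$ has centres on the perpendicular bisector $x = (s_1+s_2)/2$, and the supremum of $\angle(a,z,b)$ over $z \in \partial\mathbb{B}^2$ is realised when the circle is internally tangent to $\partial\mathbb{B}^2$. The tangency condition gives a quadratic in the centre's height, whose two roots correspond to tangencies from the two sides of $L[a,b]$; a direct comparison of radii shows that the smaller-radius circle (hence the larger inscribed angle) has its centre on the side of $L[a,b]$ opposite to the origin. At such a tangency the origin, the centre $C^{\ast}$, and the tangent point $z^{\ast}$ are collinear, which pins down $z^{\ast} = C^{\ast}/|C^{\ast}|$; elementary planar trigonometry applied to the triangle $az^{\ast}b$ then yields a closed form for $\tan(v_{\mathbb{B}^2}(a,b)/2)$ in $s_1$, $s_2$, $m$.

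Substituting the two closed forms into the claimed identity and simplifying completes the argument. The factor $\sqrt{(1+m)/(1-m)}$ emerges from the geometric asymmetry of the chord about the origin, balancing the Euclidean distances $1 \mp m$ from the chord to $\partial\mathbb{B}^2$ on the two sides. The symmetric subcase $s_1 = -s_2 = -r$ offers a sanity check: one finds $\th(h_{\mathbb{B}^2}(a,b)/4) = r/k$ and $\tan(v_{\mathbb{B}^2}(a,b)/2) = r/(1-m)$, and the identity reduces to the immediate equality $r/(1-m) = \sqrt{(1+m)/(1-m)}\,\cdot\, r/\sqrt{1-m^2}$. The main difficulty lies in the visual-angle step: one must select the correct root of the tangency quadratic, confirm that the supremum is attained at that $z^{\ast}$ rather than only approached, and then coax the resulting expression into a form directly comparable with $\th(h_{\mathbb{B}^2}(a,b)/4)$; the residual polynomial identity is then routine.
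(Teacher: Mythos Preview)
Your approach is correct but genuinely different from the paper's. The paper does not compute either side directly; instead it passes through the hyperbolic metric $\rho_{\mathbb{B}^2}$ as an intermediary. Two previously established identities are invoked: $\sh(h_{\mathbb{B}^2}/2)=\sqrt{1-m^2}\,\sh(\rho_{\mathbb{B}^2}/2)$ (Theorem~\ref{4.2}, from \cite{rv}) and $\tan(v_{\mathbb{B}^2}/2)=(1+m)u/(1+\sqrt{1+(1-m^2)u^2})$ with $u=\sh(\rho_{\mathbb{B}^2}/2)$ (Theorem~\ref{5.2}, from \cite{fkv}). Substituting the first into the second and using $\sh(x)/(1+\ch(x))=\th(x/2)$ yields the result in three lines. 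Your route, by contrast, never mentions $\rho_{\mathbb{B}^2}$: you normalise the chord, read off $h_{\mathbb{B}^2}$ from the cross-ratio, locate the extremal point for $v_{\mathbb{B}^2}$ via the tangent-circle characterisation, and verify the resulting algebraic identity. The paper's argument is far shorter once the two cited theorems are available and highlights the hyperbolic metric as the common scaffold for both quantities; your argument is more laborious but entirely self-contained, requires no external results, and makes the geometric origin of the asymmetry factor $\sqrt{(1+m)/(1-m)}$ visible. The symmetric sanity check you give is a good anchor; for the general case the ``residual polynomial identity'' is indeed routine, and your caveats about selecting the correct tangency root and attainment of the supremum are well placed.
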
 

The proof of Theorem \ref{thm11} is based on the use of the hyperbolic metric in terms of
which both metrics can be expressed by the results in \cite{fkv} and \cite{rv}.

We apply this result to prove the following sharp distortion result for $K$-quasiregular
mappings \cite{lv}. These mappings form a very wide class of mappings in the plane:
$K$-quasiregular with the parameter value $K=1$ are holomorphic functions and injective
quasiregular mappings are quasiconformal mappings.

\begin{thm} \label{thm12}
	Let  $ a, b \in \mathbb{B}^2 ,   m = d(\{0\}, L[a, b]), $
and let	  $ f: \mathbb{B}^2 \to f(\mathbb{B}^2)=\mathbb{B}^2 $ be a $ K $-quasiregular mapping. Then
\begin{eqnarray} \label{12ineq}
\th \frac{h_{\mathbb{B}^2}(f(a), f(b))}{4} \leq D \left( \th \frac{h_{\mathbb{B}^2}(a, b)}{4} \right)^{1/K},
\end{eqnarray}
where
\begin{eqnarray*}
D = 2^{1 - 1/K} \left( \frac{1}{\sqrt{1 - m^2}} \right)^{1/K} \,.
\end{eqnarray*}
\end{thm}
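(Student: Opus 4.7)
The plan is to use Theorem \ref{thm11} as a bridge translating the Hilbert metric into the visual angle metric on both sides, apply a Schwarz-type distortion estimate for the visual angle metric under $K$-quasiregular self-maps of $\mathbb{B}^2$ (of the kind available in \cite{fkv,rv}), and then simplify the resulting $m$-dependence via the algebraic identity of Theorem \ref{thm11}.

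Writing $m' = d(\{0\}, L[f(a), f(b)])$, I would apply Theorem \ref{thm11} to the image pair to obtain
$$\th \frac{h_{\mathbb{B}^2}(f(a), f(b))}{4} = \sqrt{\frac{1-m'}{1+m'}}\, \tan \frac{v_{\mathbb{B}^2}(f(a), f(b))}{2} \leq \tan \frac{v_{\mathbb{B}^2}(f(a), f(b))}{2},$$
since $\sqrt{(1-m')/(1+m')} \leq 1$; this removes the a priori uncontrolled quantity $m'$ from the picture. Next, I would invoke a distortion estimate for $v_{\mathbb{B}^2}$ under $K$-quasiregular self-maps of the disk of the form $\tan(v_{\mathbb{B}^2}(f(a),f(b))/2) \leq C_{K,m}\,(\tan(v_{\mathbb{B}^2}(a,b)/2))^{1/K}$ with an explicit prefactor. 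Finally, substituting $\tan(v_{\mathbb{B}^2}(a,b)/2) = \sqrt{(1+m)/(1-m)}\,\th(h_{\mathbb{B}^2}(a,b)/4)$ via Theorem \ref{thm11} and using the algebraic identity
$$\sqrt{\frac{1+m}{1-m}} \cdot \frac{1}{1+m} = \frac{1}{\sqrt{(1-m)(1+m)}} = \frac{1}{\sqrt{1-m^2}},$$
the $m$-dependence collapses to yield the desired constant $D = 2^{1-1/K}(1-m^2)^{-1/(2K)}$.

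The main obstacle is the visual angle distortion estimate with the sharp constant. The Hersch-Pfluger inequality $\varphi_K(r) \leq 4^{1-1/K} r^{1/K}$ combined with a crude comparison such as $\th(h_{\mathbb{B}^2}/4) \leq \th(\rho_{\mathbb{B}^2}/2)$ typically produces a coefficient $2^{2-1/K}$, which is worse than the claimed $2^{1-1/K}$ by a factor of $2$. Recovering the sharp constant requires exploiting the half-angle identity $\th(\rho/2) = 2\th(\rho/4)/(1+\th^2(\rho/4))$ to pass to a $\th(\rho/4)$-form of the Schwarz lemma, together with the chord-density identification: on a chord at distance $m$ from the origin, the Hilbert line element equals $\sqrt{1-m^2}$ times the Poincar\'e line element, which implies $\rho_{\mathbb{B}^2}(a,b) \leq h_{\mathbb{B}^2}(a,b)/\sqrt{1-m^2}$. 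This density identity explains the origin of the factor $(1-m^2)^{-1/(2K)}$ in $D$ and geometrically ties the proof back to Theorem \ref{thm11}. Once these two ingredients are combined, using concavity of $\tanh$ in the form $\th(kx) \leq k\,\th(x)$ for $k \geq 1$, the assembly of constants is routine.
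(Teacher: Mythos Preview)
Your overall strategy matches the paper's: convert both sides via Theorem~\ref{thm11} and invoke the visual angle distortion estimate from \cite{fkv}. The gap is in how you handle the image-side parameter $m'=d(\{0\},L[f(a),f(b)])$. The result you need, Theorem~1.5 of \cite{fkv}, actually reads
\[
\tan\frac{v_{\mathbb{B}^2}(f(a),f(b))}{2}\le 2^{1-1/K}\,\sqrt{\frac{1+m'}{1-m'}}\cdot\frac{1}{(1+m)^{1/K}}\left(\tan\frac{v_{\mathbb{B}^2}(a,b)}{2}\right)^{1/K},
\]
so the prefactor is \emph{not} of the form $C_{K,m}$: it carries the uncontrolled factor $\sqrt{(1+m')/(1-m')}$. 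Your crude bound $\sqrt{(1-m')/(1+m')}\le 1$ throws away exactly the quantity needed to cancel it, and you would be left with an $m'$-dependent constant in the final inequality rather than the claimed $D$.

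The fix is simply to keep Theorem~\ref{thm11} as an \emph{equality} on the image side; then $\sqrt{\tfrac{1-m'}{1+m'}}\cdot\sqrt{\tfrac{1+m'}{1-m'}}=1$ eliminates $m'$ exactly, and the remaining source-side factors combine via your identity $(1+m)^{-1/K}\bigl((1+m)/(1-m)\bigr)^{1/(2K)}=(1-m^2)^{-1/(2K)}$ to give $D$. This is precisely the paper's two-line computation, and it renders your last paragraph (Hersch--Pfluger, chord densities, the concavity bound $\th(kx)\le k\,\th(x)$) unnecessary: the sharp $2^{1-1/K}$ is already built into the \cite{fkv} estimate.
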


The three main results of this paper are the above  Theorems \ref{thm11}, \ref{thm12} and
Theorem \ref{fujiEllip}. This last theorem studies circles in the Hilbert geometry.
It is shown that Hilbert circles are, in fact, Euclidean ellipses. Using this result
we can find the sharp radii for the hyperbolic incircles and circum circles of Hilbert
circles.


The paper is organized as follows: Section \ref{sec2} provides the  definition
of the hyperbolic metric and theoretical framework necessary for developing our results. In Section \ref{sec3}, we solve a geometric problem concerning hyperbolic distances between points
of intersection of lines joining four complex points on the unit circle $ \mathbb{S}^1 $. In Section \ref{sec4}, we  give the
results from  \cite{frv} and \cite{rv} expressing the the Hilbert and visual angle metrics in terms of the hyperbolic metric. In Section \ref{sec5}, we prove the
above two main results Theorems \ref{thm11} and \ref{thm12}. As far as we know,
the distortion result in Theorem \ref{thm12} is new also for analytic functions.
In Section \ref{sec8}, we  apply some computer algebra methods, which are similar to those of \cite{frv}, 
to prove Theorem \ref{fujiEllip}.

\section{Preliminary results} \label{sec2}
%
This section presents the foundational definitions, notations, and key results that will be utilized throughout the paper, focusing on complex geometry, M\"obius transformations, and hyperbolic metrics.

The complex conjugate of a point $ z $ in the complex plane $ \mathbb{C} $ is defined as 
\begin{equation*}
\overline{z} = \operatorname{Re}(z) - \operatorname{Im}(z)i,
\end{equation*}
where $ \operatorname{Re}(z) $ and $ \operatorname{Im}(z) $ represent the real and imaginary parts of $ z $, respectively. The $ n $-dimensional unit ball is expressed by $ \mathbb{B}^n $, while the unit sphere in $\mathbb{R}^n $ is expressed by $ \mathbb{S}^{n-1} $. 

For $ a \in \mathbb{R}^n \setminus \left\lbrace 0\right\rbrace  $, let \( a^* = \frac{a}{|a|^2} \).
The dot product of two points $ a, b \in \mathbb{R}^n $ is denoted by $ a \cdot b$. The cross-ratio of four points $ u, a, b, v \in \mathbb{R}^n $ is defined as
\begin{equation} \label{cr}
	\left| u, a, b, v\right|  = \frac{\left| u - b\right|  \left| a - v\right| }{\left| u - a\right|  \left| b - v\right| }
\end{equation}
and the Hilbert metric is defined as in (\ref{hilb}). For $ a \in \mathbb{R}^2 $, $ a $ is the complex conjugate of $ a $.

Assume that $ L[a, b] $ represents the line passing through points $ a $ and $ b $ ($ b \neq a $). For distinct points $ a, b, c, d \in \mathbb{C}$, if the lines $ L[a, b] $ and $ L[c, d] $ intersect at a single point $ w $, then

\begin{equation*}
w = \text{LIS}[a, b, c, d] = L[a, b] \cap L[c, d].
\end{equation*}
The coordinates of this intersection are given by (see e.g., \cite[Ex. 4.3(1), p. 57 and p. 373]{hkv})

\begin{equation}\label{2.1}
	w = \text{LIS}[a, b, c, d] = \frac{(a\overline{b} - \overline{a}b)(c - d) - (a - b)(c\overline{d} - \overline{c}d)}{(a - b)(c - d) - (\overline{a} - \overline{b})(\overline{c} - \overline{d})}.
\end{equation}

Let $ C[a, b, c]$ represent the unique circle through three distinct, non-collinear points $ a $, $ b $, and $ c $. The formula (\ref{2.1}) easily yields a
formula for the center of this circle.

A M\"obius transformation is a mapping of the form
\begin{equation*}
	z \mapsto \frac{az + b}{cz + d}, 
	\ \ \ a, b, c, d, z \in \mathbb{C}, \ ad - bc \neq 0.
\end{equation*}
The most important features of M\"obius transformations are that they  preserve 
the cross-ratio and the angle magnitude, and, because of this, they map every Euclidean line or circle onto either a line or a circle. The special M\"obius transformation
\begin{equation} \label{mb}
	T_a(z) = \frac{z - a}{1 - \overline{a}z}, \ \ \ a \in \mathbb{B}^2 \setminus \left\lbrace 0\right\rbrace 
\end{equation}
maps the unit disk $ \mathbb{B}^2 $ onto itself with
\begin{equation*}
	T_a(a) = 0, \ \ \ T_a\left(\pm \frac{a}{\left| a\right| }\right) = \pm \frac{a}{\left| a\right| }.
\end{equation*}

\begin{nonsec}{\bf Hyperbolic geometry.}\label{hg}
We review some basic formulas and notation for hyperbolic geometry 
following \cite{b}.

The hyperbolic metrics of the unit disk ${\mathbb{B}^2}$ and the upper half plane  ${\mathbb{H}^2}$ are given, respectively, by
\begin{equation}\label{rhoB}
\sh \frac{\rho_{\mathbb{B}^2}(a,b)}{2}=
\frac{\left| a-b\right| }{\sqrt{(1-\left| a\right| ^2)(1-\left| b\right| ^2)}} ,\ \ \ a,b\in \mathbb{B}^2,
\end{equation}
and
\begin{equation}\label{rhoH}
{\rm ch}\rho_{\mathbb{H}^2}(a,b)=1+ \frac{\left| a-b\right| ^2}{2 {\rm Im}(a) {\rm Im}(b)},\ \ \ a,b\in \mathbb{H}^2\,.
\end{equation}
Both metrics are M\"obius invariant: if $G, D \in \left\lbrace  \mathbb{B}^2, \mathbb{H}^2\right\rbrace $
and $f:G \to D= f(G)$ is a M\"obius transformation, then  $\rho_G(a,b)= \rho_D(f(a),f(b))$ holds for all $a,b \in G$.
\end{nonsec}

We shall use the fact that a hyperbolic disk $ B_\rho(x,M)$ with the center $x \in {\mathbb{B}}^2$ and the radius $M>0$ is a Euclidean disk with the following
center and radius \cite[p. 56, (4.20)]{hkv}
\begin{equation}\label{hkv420}
 \begin{cases}
        B_\rho(x,M)=B^2(y,r)\;,&\\
  \noalign{\vskip5pt}
      {\displaystyle y=\frac{x(1-t^2)}{ 1-|x|^2t^2}\;,\;\;
        r=\frac{(1-|x|^2)t}{1-|x|^2t^2}\;,\;\;t={\rm th} ( M/2)\;.}&
\end{cases}
\end{equation}
Above the symbols ${\rm sh},$ ${\rm ch},$ and ${\rm th}$ stand for the hyperbolic
sine, cosine, and tangent functions. Their inverses are  ${\rm arsh},$ ${\rm arch},$ and ${\rm arth}\,.$



\bigskip
\section{An observation about hyperbolic metric}\label{sec3}
We solve here the following claim which was formulated as Problem 5.10 in \cite{frv}.

\begin{nonsec}{\bf  Claim.}\label{prob1} Let $ a, b, c, d $ be four complex points on the unit circle $ \mathbb{S}^1 $ in this order so that $ L[a, b] $ and $ L[c, d] $ are not parallel. Let $ h $ be an arbitrary point on the Euclidean segment $ [b, c] $, and fix then  $ g = \mathrm{LIS}[a, b, c, d] $, $ j = \mathrm{LIS}[g, h, a, c] $, $ k = \mathrm{LIS}[g, h, b, d] $, and  $ l = \mathrm{LIS}[g, h, a, d] $.  Note that the special case $ j = k $ is possible. Now,  
$ \rho_{\mathbb{B}^2}(h, j) = \rho_{\mathbb{B}^2}(k, l) $ (See Figure \ref{fig1}).
\end{nonsec}

\bigskip

\begin{thm}\label{fujThm} The claim formulated above
holds true.
\end{thm}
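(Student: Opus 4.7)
The plan is to combine Desargues' Involution Theorem applied to the pencil of conics through $a,b,c,d$ with a cross-ratio invariance property of the hyperbolic metric on a chord of $\mathbb{B}^2$.

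First, I would apply Desargues' Involution Theorem to the pencil of conics passing through $a, b, c, d \in \mathbb{S}^1$. This pencil contains the unit circle itself together with three degenerate (line-pair) conics: $L[a,b]\cup L[c,d]$, $L[a,c]\cup L[b,d]$, and $L[a,d]\cup L[b,c]$. Intersecting each of these four conics with the line $L[g,h]$ gives four pairs of points that, by Desargues' theorem, are pairs of a single projective involution $\phi$ on $L[g,h]$: the doubled pair $(g,g)$, the pairs $(j,k)$ and $(l,h)$, and the pair $(u,v)$ with $\{u,v\} = L[g,h]\cap\mathbb{S}^1$. In particular $g$ is a fixed point of $\phi$, while $\phi(u)=v$, $\phi(j)=k$, and $\phi(l)=h$.

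Second, I would prove the following invariance, from which the claim follows: any M\"obius involution $\phi$ of the line $L[g,h]$ that interchanges the chord endpoints $u$ and $v$ preserves the hyperbolic distance \eqref{rhoB} between any two points of the chord $[u,v]$. Parametrizing the chord by $p(\tau) = u + \tau(v-u)$, a short expansion yields $1 - |p(\tau)|^2 = \tau(1-\tau)|v-u|^2$, whence
\[ \sh^2 \tfrac{1}{2}\rho_{\mathbb{B}^2}(p,q) \;=\; \frac{(r-1)^2}{r\,|v-u|^2}, \qquad r \,:=\, \frac{\tau_p(1-\tau_q)}{\tau_q(1-\tau_p)}, \]
where $r$ is the cross-ratio $(u,v;p,q)$. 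By M\"obius invariance of the cross-ratio, $(u,v;p,q) = (\phi(u),\phi(v);\phi(p),\phi(q)) = (v,u;\phi(p),\phi(q)) = 1/(u,v;\phi(p),\phi(q))$, so $(u,v;\phi(p),\phi(q)) = 1/r$. The function $r\mapsto (r-1)^2/r$ is manifestly invariant under $r\mapsto 1/r$, hence $\rho_{\mathbb{B}^2}(\phi(p),\phi(q)) = \rho_{\mathbb{B}^2}(p,q)$. Applying this with $p=h$, $q=j$ gives $\rho_{\mathbb{B}^2}(h,j) = \rho_{\mathbb{B}^2}(l,k)$, as claimed.

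The main subtlety is the degenerate step in Desargues' theorem: one of the conics in the pencil, namely $L[a,b]\cup L[c,d]$, has its singular point $g$ on the line $L[g,h]$, so it cuts $L[g,h]$ in the doubled point $(g,g)$; this is precisely what forces $g$ to be a fixed point of the involution $\phi$. The degenerate sub-case $j=k$ singled out in the claim corresponds simply to $j=k$ being the other fixed point of $\phi$, and the argument above covers it without modification.
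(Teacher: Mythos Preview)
Your argument is correct and takes a genuinely different route from the paper. The paper normalizes so that $g>0$ is real, parametrizes the remaining data by $a,d\in\partial\mathbb{B}^2$ and a point $e$ on the arc $\overset{\frown}{bc}$, and then uses the computer algebra system Risa/Asir to verify directly that
\[
\frac{h-j}{1-\overline{j}h}=\frac{k-l}{1-\overline{l}k}
\quad\text{and}\quad
\frac{h-k}{1-\overline{k}h}=\frac{j-l}{1-\overline{l}j},
\]
from which the equalities of $\rho_{\mathbb{B}^2}$ follow via $\th(\rho/2)=|z-w|/|1-\overline{w}z|$. This is a pure symbolic verification and gives no structural reason for the identities.

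Your approach, by contrast, explains \emph{why} the equality holds: Desargues' involution for the pencil through $a,b,c,d$ furnishes a single projective involution $\phi$ on $L[g,h]$ simultaneously pairing $(u,v)$, $(j,k)$ and $(h,l)$, with $g$ as a fixed point; the identity $1-|p(\tau)|^2=\tau(1-\tau)\,|v-u|^2$ on a chord then reduces $\sh^2(\rho/2)$ to the cross-ratio expression $(r-1)^2/(r\,|v-u|^2)$, which is invariant under $r\mapsto 1/r$. This is more conceptual and immediately yields the companion identity $\rho_{\mathbb{B}^2}(h,k)=\rho_{\mathbb{B}^2}(j,l)$ as well (apply $\phi$ to the pair $(h,k)$). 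Two small points worth making explicit when you write it up: (i) the hypotheses guarantee $L[g,h]$ is not one of the six lines of the pencil and avoids the base points, so Desargues applies; (ii) for the invariance step you only need that the four image points $h,j,k,l$ lie in the open chord $(u,v)$, which is implicit in the statement (otherwise $\rho_{\mathbb{B}^2}$ would be undefined), so you need not argue in general that $\phi$ preserves the chord --- though that also follows, since the fixed point $g$ lies outside $(u,v)$.
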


\begin{proof}
	\begin{figure}[H] 
		\centering
		\includegraphics[width=0.6\linewidth]{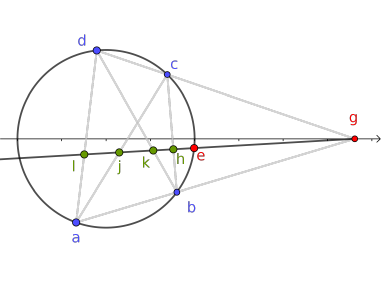}
		\caption{$\rho_{\mathbb{B}^2}(h,k) = \rho_{\mathbb{B}^2}(l,j)$}
		\label{fig1}
	\end{figure}
Let $ g $ be a positive real number, and let $ a$ and $d $ be two complex numbers 
on $ \partial\mathbb{B}^2 $.
Let $ L[a,g] $ be the line passing through $ a $  and $ g $:
	\begin{equation*}
		 L[a,g]\ :\ 
		(g-\overline{a})z+(a-g)\overline{z}-ag+\overline{a}g=0.
	\end{equation*}
	The intersection points of $ L[g,a] $ and $ \partial\mathbb{B}^2 $
	are given by the solution of the following equation:
	\begin{equation*}
		 \Big(g-\frac{1}{a}\Big)z+(a-g)\frac{1}z-ag+\frac{1}{a}g
		=0.
	\end{equation*}
	Rearranging terms, we have:
	
	\begin{equation*}
		(z - a) \left( (ag - 1)z - a + g \right) = 0.
	\end{equation*}
Let $ b $ be an intersection point different from $ a $, i.e.,
	\begin{equation*} 
		b=\frac{g-a}{1-ga}.
			\end{equation*}
Similarly, let 	\begin{equation*} 
	c=\frac{g-d}{1-gd} 	
	\end{equation*}
with respect to $ d $. Next, take a point $ e $ on the arc $ \overset{\frown}{bc}$. Let
\begin{equation*}
	h = \text{LIS}\left[ b, c, g, e \right] , \ \ \ k = \text{LIS}\left[ a, c, g, e\right] , \ \ \ j = \text{LIS}\left[ b, d, g, e\right] , \ \ \ l = \text{LIS}\left[ a, d, g, e\right] .
\end{equation*}
Then, we have the following (by using Risa/Asir) \cite{frv,n}:
$$
\frac{h-j}{1-\overline{j}h} 
=\frac{e(g-e)((a^2+1)g-2a)}
{((e^2-2ae-1)g^2+((a^2+1)e+2a)g-e^2-a^2}
=\frac{k-l}{1-\overline{l}k} 
$$
and
$$
\frac{h-k}{1-\overline{k}h}
= \frac{e(g-e)((d^2+1)g-2d)}
{(e^2-2de-1)g^2+((d^2+1)e+2d)g-e^2-d^2}
=\frac{j-l}{1-\overline{l}j}.
$$
From \eqref{rhoB} it follows that
\begin{eqnarray}\label{xx}
	\th \frac{\rho(z,w)}{2} = \frac{\left| z-w \right| }{\left|1-z\bar{w} \right| }.
\end{eqnarray}
Using (\ref{xx}) we have
$$
\rho_{\mathbb{B}^2}(h,j)=2\mbox{arth}\,\Big|\frac{h-j}{1-\overline{j}h}\Big|
=2\mbox{arth}\,\Big|\frac{k-l}{1-\overline{l}k}\Big|
=\rho_{\mathbb{B}^2}(k,l),
$$
and
$$
\rho_{\mathbb{B}^2}(h,k)=2\mbox{arth}\,\Big|\frac{h-k}{1-\overline{k}h}\Big|
=2\mbox{arth}\,\Big|\frac{j-l}{1-\overline{l}j}\Big|
=\rho_{\mathbb{B}^2}(j,l).
$$
The proof is completed.

\end{proof}

\begin{rem} 
We can see geometrically that the equality holds even if points $ k $ and $ j $ are swapped with each other.

	As point $ e $ moves on the arc $  \overset{\frown}{bc} $, the positions of $ j $ and $ k $ may be swapped with each other. This is because $ j $ and $ k $ are defined as the intersections of two lines (compare Figures \ref{fig1} and \ref{fig2}).
\end{rem}
	\begin{figure}[H] 
	\centering
	\begin{minipage}{0.48\textwidth}
		\centering
		\includegraphics[width=\linewidth]{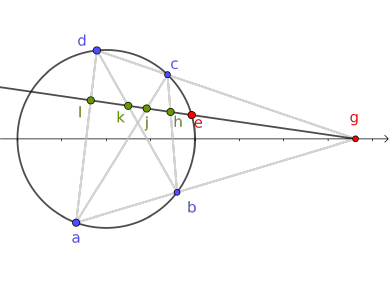}
		\caption{}
	\label{fig2}
	\end{minipage}
	\hfill
	\begin{minipage}{0.48\textwidth}
		\centering
		\includegraphics[width=\linewidth]{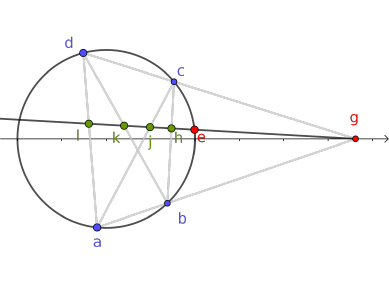}
		\caption{}
	\label{fig3}
	\end{minipage}
\end{figure}

In fact, by setting $ e $ to $ \bar{e} $, $ d $ to $ \bar{a} $, and $ a $ to $ \bar{d} $, we can obtain a figure that is symmetric with respect to the real axis with the original figure (Figure \ref{fig1}). Then, the positions of $ j $ and $ k $ are swapped with each other (compare Figures \ref{fig1} and \ref{fig3}).

%




\section{The Hilbert metric and the visual angle metric of the unit disk} \label{sec4}

For all distinct points $ a $ and $ b $ in a bounded convex domain $ G \subset \mathbb{R}^n $, the Hilbert metric is defined as \cite[Thm 2.1, p. 157]{b2}
	\begin{equation} \label{hilb}
	h_G(a, b) = \log \left| u, a, b, v \right|,
\end{equation}
where $  u $, $  v $  are the intersection points of the line $  L[a, b] $  and the domain boundary $  \partial G $  ordered in such a way that
 $  \left| u - a \right|  < \left| u - b \right|  $. See the definition of the cross-ratio from (\ref{cr}). If $  a = b $, we set $  h_G(a, b)=0$. Hilbert \cite{hilb} introduced this metric $  h_G $  as an extension of the Klein metric for any bounded convex domain $ G $. 

Unlike the hyperbolic metric $ \rho_{\mathbb{B}^2} $, the Hilbert metric is not invariant under the M\"obius automorphisms of $ \mathbb{B}^2 $ as indicated by the following theorem.

\begin{thm} (See \cite[Thm 1.2]{rv})) \label{4.2}
For all $ a, b \in \mathbb{B}^2 $, the following functional identity holds 
between the	Hilbert metric and the hyperbolic metric:
\begin{equation*}
		\sh \left( \frac{h_{\mathbb{B}^2}(a, b)}{2} \right) = 
		\sqrt{1 - m^2} \sh \left( \frac{\rho_{\mathbb{B}^2}(a, b)}{2} \right),
\end{equation*}
where $ m $ is the Euclidean distance from the origin to the line $ L[a, b] $.
\end{thm}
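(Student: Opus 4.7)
The plan is to reduce the identity to a one-dimensional computation along the chord $L[a,b]\cap\mathbb{B}^2$ by exploiting the rotational symmetry shared by both metrics, and then to perform an algebraic simplification. Both $h_{\mathbb{B}^2}$ and $\rho_{\mathbb{B}^2}$ are invariant under rotations $z\mapsto e^{i\theta}z$ of the disk --- the latter from formula \eqref{rhoB}, the former because $h_{\mathbb{B}^2}$ is built out of rotation-invariant Euclidean distances and such rotations preserve $\partial\mathbb{B}^2$ --- and the quantity $m=d(\{0\},L[a,b])$ is likewise rotation-invariant. Therefore, after a rotation, I may assume $L[a,b]$ is the horizontal line $\{z:\Im z=m\}$ and write $a=x_1+im$, $b=x_2+im$ with $x_1<x_2$. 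The intersection points of $L[a,b]$ with $\partial\mathbb{B}^2$ are then $u=-r+im$ and $v=r+im$, where $r:=\sqrt{1-m^2}$, in the correct order $|u-a|<|u-b|$.

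Substituting these coordinates into the cross-ratio \eqref{cr} gives
\begin{equation*}
h_{\mathbb{B}^2}(a,b)=\log\frac{(r+x_2)(r-x_1)}{(r+x_1)(r-x_2)}=2\arth\frac{x_2}{r}-2\arth\frac{x_1}{r},
\end{equation*}
so by the addition formula for $\arth$,
\begin{equation*}
\th\frac{h_{\mathbb{B}^2}(a,b)}{2}=\frac{r(x_2-x_1)}{r^2-x_1x_2},
\end{equation*}
and from the elementary relation $\sh s=\th s/\sqrt{1-\th^2 s}$ I would obtain
\begin{equation*}
\sh\frac{h_{\mathbb{B}^2}(a,b)}{2}=\frac{r(x_2-x_1)}{\sqrt{(r^2-x_1x_2)^2-r^2(x_2-x_1)^2}}.
\end{equation*}

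The single step that deserves attention is the algebraic factorization
\begin{equation*}
(r^2-x_1x_2)^2-r^2(x_2-x_1)^2=(r^2-x_1^2)(r^2-x_2^2),
\end{equation*}
which is what cleanly separates the two endpoint contributions under the radical. Combined with $r^2=1-m^2$ together with $|a|^2=x_1^2+m^2$ and $|b|^2=x_2^2+m^2$, this yields $r^2-x_1^2=1-|a|^2$ and $r^2-x_2^2=1-|b|^2$; and since $|a-b|=x_2-x_1$ the result follows:
\begin{equation*}
\sh\frac{h_{\mathbb{B}^2}(a,b)}{2}=\sqrt{1-m^2}\cdot\frac{|a-b|}{\sqrt{(1-|a|^2)(1-|b|^2)}}=\sqrt{1-m^2}\,\sh\frac{\rho_{\mathbb{B}^2}(a,b)}{2}
\end{equation*}
by the defining formula \eqref{rhoB} for the hyperbolic metric. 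The only real obstacle is spotting the factorization above; once the rotational reduction has trivialized the geometry, the rest of the argument is mechanical.
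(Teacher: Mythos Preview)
Your argument is correct. The rotational reduction is legitimate for both metrics and for $m$, the cross-ratio computation along the horizontal chord is accurate, and the key factorization $(r^2-x_1x_2)^2-r^2(x_2-x_1)^2=(r^2-x_1^2)(r^2-x_2^2)$ is easily verified by expansion. The identification $r^2-x_j^2=1-|a_j|^2$ then delivers the claim via \eqref{rhoB}.

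There is nothing in the present paper to compare your proof against: Theorem~\ref{4.2} is stated here as a quotation of \cite[Thm~1.2]{rv} and is used as an input to the proofs of Theorems~\ref{thm11} and \ref{thm12}, but no proof of it is given in this paper. Your self-contained derivation is therefore a genuine addition rather than a variant of anything appearing here.
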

	
\begin{thm} Under the conditions given in Claim \ref{prob1}, 
the following equalities hold for Hilbert metric:
		\begin{enumerate}
			\item $h_{\mathbb{B}^2}(h, j) = h_{\mathbb{B}^2}(k, l)$,
			\item $h_{\mathbb{B}^2}(h, k) =h_{\mathbb{B}^2}(j, l)$.
		\end{enumerate}
\end{thm}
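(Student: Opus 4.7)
The plan is to reduce the two desired Hilbert identities to the corresponding hyperbolic identities established in Theorem \ref{fujThm}, using the functional relation given in Theorem \ref{4.2}. The key geometric observation is that, by construction, the four points $h,j,k,l$ are \emph{collinear}: each of $j$, $k$, $l$ is defined as the intersection of a line with $L[g,h]$, and $h$ itself lies on this line. Consequently, all four points lie on the single chord $L[g,h]$ of the unit disk.

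From this collinearity the quantity $m = d(\{0\}, L[\cdot,\cdot])$ is \emph{the same} for each of the four pairs $(h,j)$, $(k,l)$, $(h,k)$, $(j,l)$, namely $m = d(\{0\}, L[g,h])$. This is the crucial point that allows the invariant from Theorem \ref{4.2} to be shared across the pairs.

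Now I would invoke Theorem \ref{4.2} twice and apply Theorem \ref{fujThm}. For statement (1), write
\begin{equation*}
\sh\!\left(\frac{h_{\mathbb{B}^2}(h,j)}{2}\right)
= \sqrt{1-m^2}\,\sh\!\left(\frac{\rho_{\mathbb{B}^2}(h,j)}{2}\right)
= \sqrt{1-m^2}\,\sh\!\left(\frac{\rho_{\mathbb{B}^2}(k,l)}{2}\right)
= \sh\!\left(\frac{h_{\mathbb{B}^2}(k,l)}{2}\right),
\end{equation*}
where the middle equality uses $\rho_{\mathbb{B}^2}(h,j)=\rho_{\mathbb{B}^2}(k,l)$ from Theorem \ref{fujThm} together with the fact that both sides employ the same value of $m$. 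Since $\sh$ is strictly increasing on $[0,\infty)$, this yields $h_{\mathbb{B}^2}(h,j)=h_{\mathbb{B}^2}(k,l)$. The proof of (2) is identical, using $\rho_{\mathbb{B}^2}(h,k)=\rho_{\mathbb{B}^2}(j,l)$.

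There is essentially no obstacle here: once one notes that the whole configuration $\{h,j,k,l\}$ sits on one chord, the common value of $m$ makes Theorem \ref{4.2} into a strictly monotone rescaling that preserves equalities. The only minor point worth spelling out in the write-up is the collinearity, which follows directly from the defining formulas $j,k,l \in L[g,h]$ in Claim \ref{prob1}.
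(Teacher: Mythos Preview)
Your proof is correct and follows essentially the same approach as the paper, which simply states that the result follows directly from Theorems \ref{4.2} and \ref{fujThm}. You have made explicit the key geometric fact that the paper leaves implicit, namely that $h,j,k,l$ all lie on the single line $L[g,h]$ and hence share the same value of $m$ in Theorem \ref{4.2}, so the strictly increasing map $\rho\mapsto h$ transfers the hyperbolic equalities to Hilbert equalities.
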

	
\begin{proof}
The proof follows directly by applying Theorems \ref{4.2}  and \ref{fujThm}.
\end{proof}

Let $G $ be a proper subdomain of $ \mathbb{R}^n $ such that $ \partial G $ is not a proper subset of a line. The visual angle metric for $ a, b \in G $ is defined by
\begin{equation} \label{vam}
	v_G(a, b) = \sup \left\lbrace  \alpha : \alpha = \measuredangle (a, z, b), z \in \partial G \right\rbrace .
\end{equation}
That is, the visual angle metric measures the maximal visual angle 
$\measuredangle(a, z, b)$ between the points $a$ and $b$ at the point $z$ on the boundary $ \partial G .$ 

\begin{thm} (See \cite[Thm 1.3]{fkv}) \label{5.2}
For $ a, b \in \mathbb{B}^2 $, we have
\begin{equation}
		\tan \frac{v_{\mathbb{B}^2}(a, b)}{2} =
		\frac{(1 +  m  )u}{1 + \sqrt{1 + (1 -   m^2)u^2}},
		\ \ \ u = \sh \frac{\rho_{ \mathbb{B}^2}(a, b)}{2},
\end{equation}
where $ m = |\frac{ab - \overline{a}\overline{b}}{2(a - b)}| $ is the absolute
value of the midpoint of the chord of the unit disk containing the two points 
$a $ and $ b $ and hence $  m  = d( \left\lbrace 0\right\rbrace, L[a, b] ) $.
\end{thm}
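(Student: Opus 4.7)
The plan is to reduce to a canonical configuration and compute directly. Since rotations of $\mathbb{B}^2$ preserve both $\rho_{\mathbb{B}^2}$ and $v_{\mathbb{B}^2}$ and fix $m=d(\{0\},L[a,b])$, I may assume $L[a,b]$ is horizontal at height $m\ge 0$, writing $a=\alpha+im$ and $b=\beta+im$ with $\alpha<\beta$ in $(-\sqrt{1-m^2},\sqrt{1-m^2})$.

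The first task is to locate the boundary point $z^\ast\in\partial\mathbb{B}^2$ realizing the supremum in (\ref{vam}). By the inscribed-angle theorem, the level sets $\{z:\measuredangle(a,z,b)=\theta\}$ are arcs of circles through $a$ and $b$, and the maximum angle is attained at the tangency point with $\partial\mathbb{B}^2$ of the smaller of the two circles through $a,b$ that are internally tangent to $\partial\mathbb{B}^2$; this smaller circle sits on the side of $L[a,b]$ opposite the origin. Its center $C$ lies on the perpendicular bisector $\Re z=(\alpha+\beta)/2$ of $[a,b]$, and imposing $|C|+r=1$ with $r^{2}=((\beta-\alpha)/2)^{2}+(\Im C-m)^{2}$ yields a quadratic in $\Im C$ whose appropriate root identifies $C$, whence $z^\ast=C/|C|$.

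With $z^\ast$ in hand, I would compute $v_{\mathbb{B}^2}(a,b)=\arg\bigl((b-z^\ast)/(a-z^\ast)\bigr)$ and apply the half-angle formula $\tan(v/2)=\sin v/(1+\cos v)$. In parallel, (\ref{rhoB}) supplies $u=\sh(\rho_{\mathbb{B}^2}(a,b)/2)=(\beta-\alpha)/\sqrt{(1-\alpha^2-m^2)(1-\beta^2-m^2)}$, and the algebraic identity
\[
(1-\alpha^2-m^2)(1-\beta^2-m^2)+(1-m^2)(\beta-\alpha)^2=(1-m^2-\alpha\beta)^2
\]
rewrites $\sqrt{1+(1-m^2)u^2}$ as $(1-m^2-\alpha\beta)/\sqrt{(1-\alpha^2-m^2)(1-\beta^2-m^2)}$, turning the right-hand side of the claim into an explicit rational function of $\alpha,\beta,m$. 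The main obstacle is pinning down $z^\ast$ and then matching the two rational expressions symbolically, for which Gr\"obner-basis and resultant techniques in the spirit of Section \ref{sec3} (Risa/Asir) are the natural tool. As a sanity check, in the symmetric subcase $\alpha=-\beta$ reflection across the imaginary axis forces $z^\ast=i$ when $m>0$ (the boundary point on the thin side of the chord), and the computation collapses to $\tan(v/2)=|\alpha|/(1-m)$, which is readily seen to agree with the right-hand side.
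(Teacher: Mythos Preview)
Your plan is reasonable but left incomplete: you set up the general configuration and identify the extremal boundary point $z^\ast$ as the tangency point of the smaller internally tangent circle through $a,b$, but you then defer the general symbolic match to unspecified Gr\"obner/resultant computations and actually carry out only the symmetric subcase $\alpha=-\beta$. (A small imprecision: that smaller tangent circle passes through $a$ and $b$, so it cannot ``sit on the side of $L[a,b]$ opposite the origin''; what is true, and what you use, is that its \emph{tangency point} with $\partial\mathbb{B}^2$ lies on that side.)

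The paper does not give a full proof either---the result is quoted from \cite{fkv}---but it does record the governing idea, and it differs from your direct attack. One applies an inversion $\tau:\mathbb{B}^2\to\mathbb{B}^2$ that maps the chord $L[a,b]\cap\mathbb{B}^2$ onto itself with $|\tau(a)|=|\tau(b)|$. Since $\tau$ is an anti-M\"obius self-map of $\mathbb{B}^2$, it preserves $\rho_{\mathbb{B}^2}$, preserves angle magnitudes and $\partial\mathbb{B}^2$ (hence $v_{\mathbb{B}^2}$), and fixes the chord (hence $m$). This reduces the general case precisely to your symmetric ``sanity check'', which you have already verified by hand: $\tan(v/2)=\beta/(1-m)$ equals $(1+m)u/\bigl(1+\sqrt{1+(1-m^2)u^2}\bigr)$ via the identity you wrote down. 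In short, what you treat as a consistency check is the entire computation once the inversion reduction is in place; the Gr\"obner-basis grind you propose for the non-symmetric case is unnecessary.
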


\begin{thm} Under the given conditions in Claim \ref{prob1}, 
the following equalities hold for the visual angle metric:
\begin{enumerate}
		\item  $v_{\mathbb{B}^{2}}(h, j) = v_{\mathbb{B}^{2}} (k, l)$,
		\item $v_{\mathbb{B}^{2}}(h, k) = v_{\mathbb{B}^{2}}(j, l)$.
\end{enumerate}
\end{thm}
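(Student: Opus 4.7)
The plan is to deduce both equalities by combining the hyperbolic identities in Theorem \ref{fujThm} with the functional identity of Theorem \ref{5.2}, after making one elementary collinearity observation.

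First I would record the geometric fact that the four points $h, j, k, l$ are collinear. Indeed, the definitions in Claim \ref{prob1} read $j = \mathrm{LIS}[g, h, a, c]$, $k = \mathrm{LIS}[g, h, b, d]$, and $l = \mathrm{LIS}[g, h, a, d]$, so each of $j$, $k$, $l$ lies on the line $L[g, h]$; together with $h$ itself, the four points therefore sit on the common chord $L[g, h]$ of $\mathbb{B}^2$. Setting $m_0 := d(\{0\}, L[g, h])$, we immediately obtain
\[
d(\{0\}, L[h, j]) = d(\{0\}, L[k, l]) = d(\{0\}, L[h, k]) = d(\{0\}, L[j, l]) = m_0.
\]

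Next, from Theorem \ref{fujThm} we already know $\rho_{\mathbb{B}^2}(h, j) = \rho_{\mathbb{B}^2}(k, l)$ and $\rho_{\mathbb{B}^2}(h, k) = \rho_{\mathbb{B}^2}(j, l)$. Theorem \ref{5.2} asserts that
\[
\tan \frac{v_{\mathbb{B}^2}(a, b)}{2} = \frac{(1 + m)u}{1 + \sqrt{1 + (1 - m^2)u^2}}, \qquad u = \sh \frac{\rho_{\mathbb{B}^2}(a, b)}{2},
\]
so $v_{\mathbb{B}^2}(a, b)$ depends only on the pair $(\rho_{\mathbb{B}^2}(a, b), m)$. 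Plugging in the matching hyperbolic distances together with the common value $m_0$ yields (1) and (2) at once.

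There is essentially no obstacle here: the proposition is a direct corollary of Theorems \ref{fujThm} and \ref{5.2} once the trivial collinearity of $h, j, k, l$ on $L[g, h]$ is noted. The only mild subtlety worth acknowledging is the positional swap of $j$ and $k$ described in the remark after Theorem \ref{fujThm}; however, the statements (1) and (2) are manifestly stable under this swap, so they remain valid in either configuration.
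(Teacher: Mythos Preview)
Your argument is correct and is exactly the approach the paper takes: the paper's proof is the single sentence ``The proof follows directly by applying Theorems \ref{5.2} and \ref{fujThm}.'' Your write-up merely makes explicit the collinearity of $h,j,k,l$ on $L[g,h]$ (so that the parameter $m$ in Theorem \ref{5.2} is common to all four pairs), which is the only point one has to check and which the paper leaves implicit.
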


\begin{proof}
The proof follows directly by applying Theorems \ref{5.2}  and
\ref{fujThm}.
\end{proof}

The proof of Theorem \ref{5.2} makes use of the inversion $\tau : {\mathbb{B}^2} \to {\mathbb{B}^2} = \tau({\mathbb{B}^2}) $ mapping the chord $L[a,b] \cap  {\mathbb{B}^2}$ onto itself with $|\tau(a)| = |\tau(b)|.$ Under this transformation we also have $v_{{\mathbb{B}^2}}(a,b) = v_{{\mathbb{B}^2}}(\tau(a),\tau(b))$ and,
by symmetry, $ v_{{\mathbb{B}^2}}(\tau(a),\tau(b))$ can be easily computed.
	
\begin{figure}[H] 
	\centering
	\includegraphics[width=0.7\linewidth]{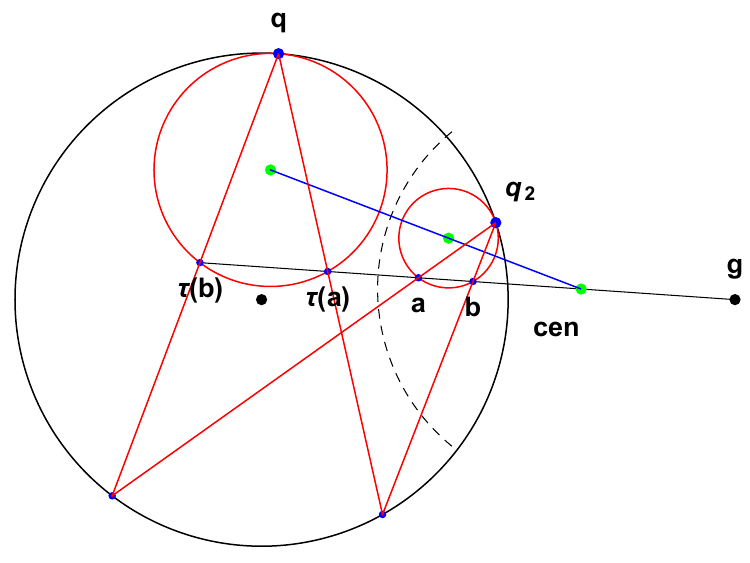}
	\caption{ Under the inversion $\tau,$
	$v_{{\mathbb{B}^2}}(a,b) = v_{{\mathbb{B}^2}}(\tau(a),\tau(b))$
and $\rho_{{\mathbb{B}^2}}(a,b) = \rho_{{\mathbb{B}^2}}(\tau(a),\tau(b)).$	
Observe that $\measuredangle(a, q_2, b)= \measuredangle(\tau(a),q,\tau(b))$
$ =v_{{\mathbb{B}^2}}(a,b)\, $
and $\tau(q_2)=q.$
	}
	\label{fig4}
\end{figure}




\bigskip

\begin{lem}
For $ a,b\in\mathbb{B}^2 $, the Hilbert distance $ h_{\mathbb{B}^2}(a,b) $
is given by $ \log H $, where
$$
  H=\dfrac{{\rm Re\,}\big((1-a\overline{b})^2\big)+|a-b|^2
           +2\,{\rm Re\,}(1-a\overline{b})
           \sqrt{|a-b|^2-\big({\rm Im\,}(a\overline{b})\big)^2}}
          {(1-|a|^2)(1-|b|^2)}.
$$
\end{lem}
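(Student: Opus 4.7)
The plan is to derive a closed form for $e^{h_{\mathbb{B}^2}(a,b)}$ by combining the identity of Theorem~\ref{4.2} (linking the Hilbert and hyperbolic metrics) with the explicit formula (\ref{rhoB}) for $\sinh(\rho_{\mathbb{B}^2}(a,b)/2)$. Once $\sinh(h/2)$ is expressed in terms of $a$ and $b$, one computes $\cosh(h/2)$ from the Pythagorean identity and then writes $e^{h/2} = \cosh(h/2)+\sinh(h/2)$, so $H = e^{h} = (\cosh(h/2)+\sinh(h/2))^{2}$.

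First, I would combine Theorem~\ref{4.2} with (\ref{rhoB}) to obtain
\begin{equation*}
\sinh^{2}\!\left(\frac{h_{\mathbb{B}^2}(a,b)}{2}\right) = \frac{(1-m^{2})|a-b|^{2}}{(1-|a|^{2})(1-|b|^{2})}.
\end{equation*}
Next, I would use the elementary fact that the distance from the origin to $L[a,b]$ is $m = |{\rm Im}(a\overline{b})|/|a-b|$, so $(1-m^{2})|a-b|^{2} = |a-b|^{2} - ({\rm Im}(a\overline{b}))^{2}$, exactly the expression appearing under the radical in the statement. Then I would compute $\cosh^{2}(h/2) = 1+\sinh^{2}(h/2)$; the crucial step is to simplify the numerator
\begin{equation*}
(1-|a|^{2})(1-|b|^{2}) + |a-b|^{2} - ({\rm Im}(a\overline{b}))^{2}
\end{equation*}
by expanding $(1-|a|^{2})(1-|b|^{2}) = 1-|a|^{2}-|b|^{2}+|a\overline{b}|^{2}$ and $|a-b|^{2} = |a|^{2}+|b|^{2} - 2{\rm Re}(a\overline{b})$, and then using $|a\overline{b}|^{2} = ({\rm Re}(a\overline{b}))^{2} + ({\rm Im}(a\overline{b}))^{2}$ to collapse the whole expression to the perfect square $(1-{\rm Re}(a\overline{b}))^{2} = ({\rm Re}(1-a\overline{b}))^{2}$. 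Since $|a|,|b|<1$ forces ${\rm Re}(1-a\overline{b})>0$, this yields
\begin{equation*}
\cosh\!\left(\frac{h_{\mathbb{B}^2}(a,b)}{2}\right) = \frac{{\rm Re}(1-a\overline{b})}{\sqrt{(1-|a|^{2})(1-|b|^{2})}}.
\end{equation*}

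Finally, I would expand $H = (\cosh(h/2)+\sinh(h/2))^{2}$ to get
\begin{equation*}
H = \frac{({\rm Re}(1-a\overline{b}))^{2} + |a-b|^{2} - ({\rm Im}(a\overline{b}))^{2} + 2{\rm Re}(1-a\overline{b})\sqrt{|a-b|^{2} - ({\rm Im}(a\overline{b}))^{2}}}{(1-|a|^{2})(1-|b|^{2})},
\end{equation*}
and then recognise, using ${\rm Im}(1-a\overline{b}) = -{\rm Im}(a\overline{b})$, the identity $({\rm Re}(1-a\overline{b}))^{2} - ({\rm Im}(a\overline{b}))^{2} = {\rm Re}((1-a\overline{b})^{2})$, which converts the expression into the stated form and gives $h_{\mathbb{B}^{2}}(a,b) = \log H$. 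The only genuine obstacle is the algebraic collapse to the perfect square in the second step; everything else is bookkeeping once that identification is made, and the real-part identity used in the last step is a direct consequence of $(x+iy)^{2}$ having real part $x^{2}-y^{2}$.
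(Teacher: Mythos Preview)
Your proof is correct, but it proceeds along a genuinely different line from the paper's. The paper argues directly from the definition of $h_{\mathbb{B}^2}$: it writes down the quadratic whose roots are the chord endpoints $u,v$, extracts $u+v$ and $uv$ by Vieta, observes that the two possible cross-ratios are real and reciprocal, and then eliminates $u,v$ to obtain a quadratic equation for $H$ (equation \eqref{eq:eqhil}) whose larger root is the stated expression. Your argument instead bypasses $u,v$ entirely by invoking Theorem~\ref{4.2} and formula~\eqref{rhoB}, computing $\sh(h/2)$ and $\ch(h/2)$ in closed form, and squaring $e^{h/2}=\ch(h/2)+\sh(h/2)$; the only nontrivial step is the pleasant collapse of $(1-|a|^2)(1-|b|^2)+|a-b|^2-(\mathrm{Im}\,a\overline{b})^2$ to $(\mathrm{Re}(1-a\overline{b}))^2$, which you identify correctly. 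Your route is shorter and makes transparent that the lemma is essentially a reformulation of Theorem~\ref{4.2}; the paper's route is self-contained and does not rely on the external result from \cite{rv}, which is useful if one wants an independent derivation of the formula.
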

\bigskip

\begin{proof}
The equation of the line $ L[a,b] $ is given by
$$
   (\overline{a}-\overline{b})z-(a-b)\overline{z}=\overline{a}b-a\overline{b}.
$$
Let $ u,v $ be the intersection points of the line $ L[a,b] $ and 
the unit circle $ {S}^1 $.
So, $ u,v $  are the solutions to the quadratic equation
$$
   (\overline{a}-\overline{b})z^2-(\overline{a}b-a\overline{b})z-(a-b)=0.
$$
From the relation between the roots and the coefficients of a quadratic equation, we have
\begin{equation}\label{eq:uv}
    u+v=\frac{\overline{a}b-a\overline{b}}{\overline{a}-\overline{b}},
    \qquad \textrm{and} \qquad
    uv=-\frac{a-b}{\overline{a}-\overline{b}}.
\end{equation}
Then, the exponential $H$ of the Hilbert distance is given by
$$
  H= \max\bigg\{\frac{|u-b||v-a|}{|u-a||v-b|},\ 
             \frac{|u-a||v-b|}{|u-b||v-a|}\bigg\}.
$$
The two elements above are reciprocals of each other. 
Moreover, we remark that the both values $ \frac{(u-b)(v-a)}{(u-a)(v-b)} $
and $ \frac{(u-a)(v-b)}{(u-b)(v-a)} $ are real because $ a,b,u,v $
are collinear. So, 
\begin{equation}\label{eq:hilab}
   H=\max\bigg\{\frac{(u-b)(v-a)}{(u-a)(v-b)},\ 
             \frac{(u-a)(v-b)}{(u-b)(v-a)}\bigg\}.
\end{equation}
Eliminating $ u,v $ from \eqref{eq:hilab} and  \eqref{eq:uv} gives us
\begin{equation}\label{eq:eqhil}
   (1-a\overline{a})(1-b\overline{b})H^2
   -\big((1-a\overline{b})^2+(1-\overline{a}b)^2
        +2(a-b)(\overline{a}-\overline{b})\big)H
       +(1-a\overline{a})(1-b\overline{b})=0.
\end{equation}
Note that the above equation has two solutions that are 
reciprocals of each other.
Therefore, the larger one gives $ H $. In fact,
\begin{align*}
  H & = \frac{(1-a\overline{b})^2+(1-\overline{a}b)^2+2|a-b|^2+
           |2-a\overline{b}-\overline{a}b|
           \sqrt{4|a-b|^2-|a\overline{b}-\overline{a}b|^2}}
            {2(1-|a|^2)(1-|b|^2)} \\
    & = \frac{\textrm{Re\,}\big((1-a\overline{b})^2\big)+|a-b|^2
           +2\textrm{Re\,}(1-\overline{a}b)
            \sqrt{|a-b|^2-\big(\textrm{Im}(a\overline{b})\big)^2}}
             {(1-|a|^2)(1-|b|^2)}.
\end{align*}
Hence, the assertion is obtained.
\end{proof}

\section{A functional identity for the Hilbert metric} \label{sec5}
In this section, we apply Theorems \ref{4.2} and \ref{5.2} to prove  Theorem \ref{thm11} and apply it to study distortion under quasiregular mappings.

\begin{nonsec}{\bf The proof of Theorem \ref{thm11}. } 
By Theorem \ref{4.2}, we have
\begin{equation}
	\label{eq:6.2}
	\sh \frac{\rho_{\mathbb{B}^2}(a,b)}{2} 
	= \frac{1}{\sqrt{1-m^2}} \sh \frac{h_{\mathbb{B}^2}(a,b)}{2}.
\end{equation}
Theorem \ref{5.2} yields
\begin{equation}
	\label{eq:6.3}
	\tan \frac{v_{\mathbb{B}^2}(a,b)}{2} 
	= \frac{(1+m) \sh \frac{\rho_{\mathbb{B}^2}(a,b)}{2}}
	{1 + \sqrt{1+(1-m^2)\sh^2\frac{\rho_{\mathbb{B}^2}(a,b)}{2}}}.
\end{equation}
Substitution of \eqref{eq:6.2} into \eqref{eq:6.3} yields
\begin{eqnarray*}
	\begin{array}{ll}
		\displaystyle \tan \frac{v}{2} & \displaystyle = \sqrt{\frac{1+m}{1-m}} \frac{ \sh \frac{h}{2}}
		{ 1 + \sqrt{1+\sh^2\frac{h}{2}} } =\sqrt{\frac{1+m}{1-m}} \frac{ \sh \frac{h}{2}}
		{ 1 + \ch \frac{h}{2} } \\
		&  \\
		& \displaystyle = \sqrt{\frac{1+m}{1-m}}\, \th \frac{h}{4},
	\end{array}
\end{eqnarray*}
where $v = v_{\mathbb{B}^2}(a,b)$ and $h = h_{\mathbb{B}^2}(a,b)$. \hfill $\square$
\end{nonsec}

We next apply the functional identity of Theorem \ref{thm11} to study distortion under quasiregular mappings. For these mappings we refer the reader to \cite{hkv,lv}.

\begin{nonsec}{\bf Proof of Theorem \ref{thm12}.}
By Theorem 1.5 in \cite{fkv}
\begin{eqnarray} \label{5.5}
\tan \frac{v_{\mathbb{B}^2}(f(a), f(b))}{2} \leq 2^{1-1/K}\, c \, \left( \tan \frac{v_{\mathbb{B}^2}(a, b)}{2} \right)^{1/K},
\end{eqnarray}
where
\begin{eqnarray*}
c = \sqrt{\frac{1 + m_1}{1 - m_1}} \cdot \frac{1}{(1 + m)^{1/K}}
\end{eqnarray*}
and $ m_1 = d(\{0\}, L[f(a), f(b)]) $.
Applying Theorem \ref{thm11}, we can write (\ref{5.5}) as follows:
\[\displaystyle \th \frac{h_{\mathbb{B}^2}(f(a), f(b))}{4} \displaystyle  \leq D\,
\left( \th \frac{h_{\mathbb{B}^2}(a, b)}{4} \right)^{1/K} \,\]
\[
\displaystyle  D = \sqrt{\frac{1 - m_1}{1 + m_1}} \sqrt{\frac{1 + m_1}{1 - m_1}}\frac{2^{1 - 1/K}}{\left( 1 + m\right)^{ 1/K}}\left( \sqrt{\frac{1 + m}{1 - m}}\right) ^{1/K} =\displaystyle  2^{1 - 1/K} \left( \frac{1}{\sqrt{1 - m^2}} \right)^{1/K}\,.\]
\hfill $\square$
\end{nonsec}

\medskip

\begin{rem}{\label{sharp}}
Theorem \ref {thm12} is sharp. We outline here how the sharpness can be proven
for a M\"obius transformation $T_w,$ \eqref{mb},  when $K=1.$ To this effect, fix $w = 0.9$ and
choose $t \in (0,1)$ and denote $a= T_w^{-1}(it), b= T_w^{-1}(-it)\,. $ Then
the parameter $m= |{\rm Re}(a)|=  |{\rm Re}(b)|$ and one can show by computer tests
that the quotient of the two sides of the inequality \eqref{12ineq} tends to $1$ when
$t \to 0.$

\end{rem}

\medskip

Theorem \ref {thm12} is apparently new also for the case of analytic functions. In fact,
we are not familiar with any distortion results on analytic functions, expressed
in terms of the Hilbert metric.

\medskip
\begin{cor}
	Let $ f: \mathbb{B}^2 \to f(\mathbb{B}^2) = \mathbb{B}^2 $ be a $K$-quasiregular mapping and $ a, b \in \mathbb{B}^2$, and let $ 0 \in L[a, b] $.
\begin{enumerate}
	\item Then
\begin{eqnarray*}
	\th \frac{h_{\mathbb{B}^2}(f(a), f(b))}{4} \leq 
	2^{1 - 1/K} \left( \th \frac{\rho_{\mathbb{B}^2}(a, b)}{4} \right)^{1/K}.
\end{eqnarray*}
	\item If both $ 0 \in L[a, b] $ and $ 0 \in L[f(a), f(b)] $ hold, then
\begin{eqnarray*}
	\th \frac{\rho_{\mathbb{B}^2}(f(a), f(b))}{4} \leq 
	2^{1 - 1/K} \left( \th \frac{\rho_{\mathbb{B}^2}(a, b)}{4} \right)^{1/K}.
\end{eqnarray*}
\end{enumerate}
\end{cor}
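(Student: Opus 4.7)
The plan is to derive the corollary as a direct consequence of Theorem \ref{thm12} together with the identity in Theorem \ref{4.2}. The crucial observation is that the hypothesis $0 \in L[a,b]$ is equivalent to $m = d(\{0\}, L[a,b]) = 0$, and similarly $0 \in L[f(a),f(b)]$ is equivalent to $m_1 = d(\{0\}, L[f(a),f(b)]) = 0$. Under the condition $m=0$, Theorem \ref{4.2} collapses to
\[
\sh \frac{h_{\mathbb{B}^2}(a,b)}{2} = \sh \frac{\rho_{\mathbb{B}^2}(a,b)}{2},
\]
so that $h_{\mathbb{B}^2}(a,b) = \rho_{\mathbb{B}^2}(a,b)$. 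The same identification applies to $f(a), f(b)$ whenever $m_1 = 0$.

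For part (1), I would substitute $m=0$ into the conclusion of Theorem \ref{thm12}, which yields the constant
\[
D = 2^{1-1/K} \left(\frac{1}{\sqrt{1-0^2}}\right)^{1/K} = 2^{1-1/K},
\]
giving
\[
\th \frac{h_{\mathbb{B}^2}(f(a), f(b))}{4} \leq 2^{1-1/K} \left(\th \frac{h_{\mathbb{B}^2}(a,b)}{4}\right)^{1/K}.
\]
Replacing $h_{\mathbb{B}^2}(a,b)$ by $\rho_{\mathbb{B}^2}(a,b)$ on the right, which is permitted because $m=0$, produces exactly the inequality in (1). Note that no hypothesis is placed on $f(a), f(b)$ here; the Hilbert metric is kept on the left-hand side.

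For part (2), the extra assumption $0 \in L[f(a), f(b)]$ gives $m_1 = 0$, and therefore $h_{\mathbb{B}^2}(f(a),f(b)) = \rho_{\mathbb{B}^2}(f(a),f(b))$ by the same application of Theorem \ref{4.2}. Substituting this equality into the conclusion of part (1) replaces the Hilbert metric on the left-hand side with the hyperbolic metric, yielding the inequality in (2).

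There is essentially no technical obstacle: the argument is a direct specialization of Theorem \ref{thm12} at $m=0$ together with the degeneration of the Hilbert-hyperbolic identity in Theorem \ref{4.2} at $m=0$ (and $m_1=0$ in part (2)). The only thing to be careful about is bookkeeping of which of the two parameters $m, m_1$ is being set to zero, and in which of the two inequalities the Hilbert-hyperbolic identification is applied.
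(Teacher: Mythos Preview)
Your proposal is correct and follows essentially the same approach as the paper's own proof: specialize Theorem \ref{thm12} at $m=0$ to obtain $D=2^{1-1/K}$, use Theorem \ref{4.2} with $m=0$ to identify $h_{\mathbb{B}^2}(a,b)=\rho_{\mathbb{B}^2}(a,b)$ for part (1), and then repeat the identification with $m_1=0$ on the image side for part (2).
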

\begin{proof}
\begin{enumerate}
	\item Because $ 0 \in L[a, b] $, we have $m=0$ and  it follows from Theorem \ref{4.2} that 
\begin{eqnarray*}
	h_{\mathbb{B}^2}(a, b) = \rho_{\mathbb{B}^2}(a, b),
\end{eqnarray*}
and hence the constant $D $ in Theorem \ref{thm12} equals $ 2^{1 - 1/K} $. 
	
	\item The proof is similar to the above proof. 
\end{enumerate}
\end{proof}
\small

\normalsize



\section{Hilbert circles} \label{sec8}

We consider here Hilbert circles in  the unit disk and, in particular, we compare the Hilbert circles to Euclidean circles. We apply  Gr\"obner bases from 
computer algebra  in the same way as in \cite{frv}  to prove that Hilbert circles are Euclidean
ellipses.

We first study the defining equation  of the Hilbert circle $ \partial B_h(z_0,t) .$

\begin{thm} \label{fujiEllip}
For $ z_0\in\mathbb{B}^2 $, the boundary $ \partial B_h(z_0,t) $ of
the Hilbert disk forms the ellipse defined by the equation 
\begin{align} \notag
   & \overline{z_0}^2rz^2
     -\big((r^2+1)|z_0|^2-(r+1)^2\big)z\overline{z}+z_0^2r\overline{z}^2 \\
       \label{eq:hil-equi}
   & \qquad
     -4\overline{z_0}rz-4z_0r\overline{z}+(r+1)^2|z_0|^2-(r-1)^2=0,
\end{align}
where $ r=e^t $.
\end{thm}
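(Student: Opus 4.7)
The plan is to derive the ellipse equation directly from the quadratic characterisation of $H := e^{h_{\mathbb{B}^2}(a,b)}$ established in the lemma immediately preceding the theorem. That lemma produces \eqref{eq:eqhil},
\[
(1-|a|^{2})(1-|b|^{2})H^{2} - M(a,b)\,H + (1-|a|^{2})(1-|b|^{2}) = 0,
\]
with $M(a,b) := (1-a\overline{b})^{2} + (1-\overline{a}b)^{2} + 2(a-b)(\overline{a}-\overline{b})$. A point $z$ lies on $\partial B_{h}(z_{0},t)$ precisely when $H(z_{0},z) = r = e^{t}$. The quadratic \eqref{eq:eqhil} in $H$ has reciprocal roots $H$ and $1/H$; since the Hilbert distance always satisfies $H\geq 1 > 1/r$ when $t>0$, substituting $H=r$, $a=z_{0}$, $b=z$ into \eqref{eq:eqhil} yields an equation in $(z,\overline{z})$ whose zero set is exactly $\partial B_{h}(z_{0},t)$.

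The main algebraic step is to expand this substitution, using $|z|^{2}=z\overline{z}$, $|z_{0}|^{2}=z_{0}\overline{z_{0}}$, the binomials $(1-z_{0}\overline{z})^{2} = 1 - 2z_{0}\overline{z} + z_{0}^{2}\overline{z}^{2}$ together with its complex conjugate, and $2(z_{0}-z)(\overline{z_{0}}-\overline{z}) = 2|z_{0}|^{2} + 2|z|^{2} - 2z_{0}\overline{z} - 2\overline{z_{0}}z$. Upon collecting by monomials in $z,\overline{z}$, the $|z_{0}|^{2}|z|^{2}$ contributions coming from the $H^{2}$ and constant blocks cancel those coming from the $M(a,b)H$ block, leaving a strictly quadratic expression in $(z,\overline{z})$. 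A coefficient comparison then recovers \eqref{eq:hil-equi} up to an overall sign. Following the strategy of \cite{frv}, I plan to verify this polynomial identity mechanically via a Gr\"obner basis normal-form computation in Risa/Asir.

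To confirm the curve is genuinely an ellipse, write $z = x + iy$ and apply $z^{2}+\overline{z}^{2} = 2(x^{2}-y^{2})$, $z^{2}-\overline{z}^{2} = 4ixy$, $z\overline{z} = x^{2}+y^{2}$. The real quadratic form associated to \eqref{eq:hil-equi} then has coefficients
\[
A = (r+1)^{2} - (r^{2}+1)|z_{0}|^{2} + 2r\operatorname{Re}(z_{0}^{2}),\quad C = (r+1)^{2} - (r^{2}+1)|z_{0}|^{2} - 2r\operatorname{Re}(z_{0}^{2}),
\]
and cross-term $B = 4r\operatorname{Im}(z_{0}^{2})$. A short factorisation yields
\[
4AC - B^{2} = 4(r+1)^{2}(1-|z_{0}|^{2})\bigl[(r+1)^{2} - (r-1)^{2}|z_{0}|^{2}\bigr],
\]
which is strictly positive for $|z_{0}|<1$ and $r > 1$; since also $A+C = 2[(r+1)^{2}-(r^{2}+1)|z_{0}|^{2}] > 0$, the form is positive definite and the locus is an ellipse.

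I expect the principal difficulty to lie in the second step: the full expansion of \eqref{eq:eqhil} produces many terms and the $|z_{0}|^{2}|z|^{2}$ cancellations must be exact, so a computer-algebra reduction along the lines of the proof of Theorem \ref{fujThm} is essentially indispensable for a fully rigorous identification with \eqref{eq:hil-equi}. By contrast, ruling out the spurious branch $\{H = 1/r\}$ is immediate from $H\geq 1$, and the ellipse discriminant calculation is routine once the equation is in standard form.
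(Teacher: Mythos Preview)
Your approach is correct and is genuinely different from the paper's. The paper starts the proof from scratch: it writes the line $L[z_0,z_1]$, uses $\overline u=1/u$, $\overline v=1/v$ to set up the system \eqref{eq:sol-coef}--\eqref{eq:hilT}, and then eliminates $u,v$ via a Gr\"obner-basis computation, obtaining a factored polynomial $(z-z_0)^2\,C_1\,C_2$. It must then argue geometrically that $C_2=0$ lies outside $\mathbb{B}^2$ (the analogue of your spurious branch) before identifying $C_1=0$ with \eqref{eq:hil-equi}. You instead recycle the preceding Lemma, whose equation \eqref{eq:eqhil} has \emph{already} eliminated $u,v$; substituting $H=r$, $a=z_0$, $b=z$ yields \eqref{eq:hil-equi} directly, and the extraneous branch is simply the other root $H=1/r<1$, killed by $H\ge 1$. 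Your route is shorter and explains transparently why exactly one spurious factor appears; the paper's route is self-contained (it does not rely on the Lemma) and exhibits the full elimination ideal.

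One small correction to your outline: there is no quartic obstacle. The monomial $|z_0|^2|z|^2=z_0\overline{z_0}\,z\overline{z}$ is already degree~$2$ in $(z,\overline z)$, so no cancellation is required to make the substituted equation quadratic; the term simply contributes $(r^2+1)|z_0|^2$ to the $z\overline z$ coefficient. The expansion can be done by hand in a few lines (group the $(r^2+1)(1-|z_0|^2)(1-|z|^2)$ and $-rM(z_0,z)$ pieces and read off coefficients), so the Gr\"obner normal-form step you plan is optional rather than ``essentially indispensable.'' Your discriminant computation $4AC-B^2=4(r+1)^2(1-|z_0|^2)\big[(r+1)^2-(r-1)^2|z_0|^2\big]$ matches the paper's Remark following the theorem and correctly certifies the ellipse.
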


\begin{rem} \label{semiaxes}
If $ z_0\in\mathbb{R} $, the equation \eqref{eq:hil-equi} can be 
written as
\begin{equation}\label{eq:equi-real1}
  \big((r+1)^2-(r-1)^2z_0^2\big)x^2-8z_0rx 
   +(1-z_0^2)(r+1)^2y^2+(r+1)^2z_0^2-(r-1)^2=0,
\end{equation}
where $ z=x+iy $.
The above equation is also expressed by
\begin{equation}\label{eq:equi-real2}
    \frac{1}{(r^2-1)^2(1-z_0^2)^2}
        \Big(\big((r+1)^2-(r-1)^2z_0^2\big)x-4rz_0\Big)^2
    +\frac{(r+1)^2-(r-1)^2z_0^2}{(r-1)^2(1-z_0^2)}y^2=1.
\end{equation}
So, the semi-minor and semi-major axes are
$$
   \frac{(r^2-1)(1-z_0^2)}{(r+1)^2-(r-1)^2z_0^2} 
   \quad\textrm{and}\quad
   \frac{(r-1)\sqrt{1-z_0^2}}{\sqrt{(r+1)^2-(r-1)^2z_0^2}},
$$
respectively.
\end{rem}
\bigskip

\begin{proof}
For $ z_0,z_1\in \mathbb{B}^2 $ with $ z_0\neq z_1 $,
the equation of the line $ L[z_0,z_1] $
passing through the points  $ z_0, z_1 $  is given by 
\begin{equation}\label{eq:linez1z2}
  (\overline{z_0}-\overline{z_1})z-(z_0-z_1)\overline{z}
     =\overline{z_0}z_1-z_0\overline{z_1}.
\end{equation}
Let $ u $ and $v $ be the two distinct intersection points 
of the unit circle and the line $ L[z_0,z_1] $ with $|u-z_0|\le |v-z_0|$.
Since $ u $  and $ v $ are points on the unit circle,
they are the roots to the following equation, obtained by the 
substitution $ \overline{z}=\frac{1}{z} $ into \eqref{eq:linez1z2},
$$
   (\overline{z_0}-\overline{z_1})z^2
   -(\overline{z_0}z_1-z_0\overline{z_1})z
   -(z_0-z_1)=0.
$$
From the relation between the roots and the coefficients of a quadratic equation, 
we obtain
\begin{equation}\label{eq:sol-coef}
   u+v=\frac{\overline{z_0}z_1-z_0\overline{z_1}}{\overline{z_0}-\overline{z_1}},
   \qquad
   uv=-\frac{z_0-z_1}{\overline{z_0}-\overline{z_1}}.
\end{equation}
By the definition \eqref{hilb}
$$
    h_{\mathbb{B}^2}(z_0,z_1)=\log \frac{|u-z_1||v-z_0|}{|u-z_0||v-z_1|}=t.
$$
Since $ \overline{u}=\frac{1}{u} $ and $ \overline{v}=\frac{1}{v} $,
setting $ r=e^t $, we have
\begin{equation}\label{eq:hilT}
  (u-z_1)(v-z_0)(1-\overline{z_1}u)(1-\overline{z_0}v)
   =r^2(u-z_0)(v-z_1)(1-\overline{z_0}u)(1-\overline{z_1}v).
\end{equation}
Using Risa/Asir, a symbolic computation system, 
eliminating $ u,v $ from the system of equation 
obtained from \eqref{eq:sol-coef} and \eqref{eq:hilT},
substituting $ z=z_1 $, we obtain 
\begin{equation}\label{eq:elim}
   (z-z_0)^2\cdot C_1\cdot C_2=0,
\end{equation}
where
\begin{align}
       \label{eq:C1}
   C_1 &= \overline{z_0}^2rz^2
         +\big((r^2+1)|z_0|^2-(r+1)^2\big)z\overline{z}+z_0^2r\overline{z}^2 \\
       &  \qquad \notag
         -4\overline{z_0}rz-4z_0r\overline{z}+(r+1)^2|z_0|^2-(r-1)^2,\\
        \label{eq:C2}
  C_2 & =\overline{z_0}rz^2
        +\big((r^2+1)|z_0|^2-(r-1)^2\big)z\overline{z}+z_0^2r\overline{z}^2\\
       &  \qquad \notag
       -4\overline{z_0}rz-4z_0r\overline{z}-(r-1)^2|z_0|^2+(r+1)^2.
\end{align}
(In fact, to find the elimination ideal generated from \eqref{eq:elim},
we compute the Gr\"obner basis of the ideals generated 
by the polynomials \eqref{eq:sol-coef} and \eqref{eq:hilT}. 
For more details see 
Remark \ref{idealrmk}.)

The first factor of the left side of \eqref{eq:elim} 
is non-zero because $ z_0\neq z_1=z $.

Next, for each fixed $ z_0, r $, we check that the equation $ C_2=0 $ 
defines a curve that is not contained in the unit disk.
By rotational symmetry, we may assume $ 0\leq z_0<1 $.
Then, setting $ z=x+iy $, $ C_2=0 $ is written as
$$
  \big((r+1)^2z_0^2-(r-1)^2\big)x^2-8rz_0x-(r-1)^2(1-z_0^2)y^2
     -(r-1)^2z_0^2+(r+1)^2=0.
$$
If $ x^2+y^2<1 $, we have
$$
  \big((r+1)^2z_0^2-(r-1)^2\big)x^2-8rz_0x-(r-1)^2(1-z_0^2)(1-x^2)
     -(r-1)^2z_0^2+(r+1)^2<0.
$$
Simplifying the left side, we have
$$
    4r(z_0x-1)^2<0.
$$
However, the above is not valid as $ r>0 $.
Therefore, the curve defined by $ C_2=0 $ is  outside the unit disk
and cannot be the Hilbert circle.

On the other hand, we can use the same argument as before
to check that the equation $ C_1=0 $ defines a curve contained in the unit disk.
Setting $ z=x+iy $, $ C_1=0 $ is written as
$$
   \big((r+1)^2-(r-1)^2z_0^2\big)x^2-8z_0rx
      +(1-z_0^2)(r+1)^2y^2+(r+1)^2z_0^2-(r-1)^2=0.
$$
If $ x^2+y^2>1 $, we have
$$
   \big((r+1)^2-(r-1)^2z_0^2\big)x^2-8z_0rx
      +(1-z_0^2)(r+1)^2(1-x^2)+(r+1)^2z_0^2-(r-1)^2<0.
$$
Simplifying the left side, we have
$$
   4r(z_0x-1)^2<0.
$$
But, the above is not valid as $ r>0 $.
Therefore,  the curve defined by $ C_1=0 $ is inside the unit disk.
Hence, the equation $ C_1=0 $ gives a defining equation of the 
required Hilbert circle.
\end{proof}

\bigskip


\begin{rem}
In fact, from 
$$
    ((1+r)^2-(1+r^2)|z_0|^2)^2-4|z_0|^4r^2
    =(r+1)^2(1-|z_0|^2)((r+1)^2-(r^2+1)|z_0|^2)>0,
$$
we can check that $ C_1=0 $ is an equation of an ellipse.
\end{rem}

\begin{rem} \label{idealrmk}
Let $I$ be the ideal generated by polynomials
 \[  (\overline{z_0}-\overline{z_1})(u+v)
                -(\overline{z_0}z_1-z_0\overline{z_1})=0, \quad
          (\overline{z_0}-\overline{z_1})uv+(z_0-z_1)=0,
\]
and
\[
  (u-z_1)(v-z_0)(1-\overline{z_1}u)(1-\overline{z_0}v)
   -r^2(u-z_0)(v-z_1)(1-\overline{z_0}u)(1-\overline{z_1}v)=0.
\]
in variables $u, v, z_0, z_1,\overline{z_0},\overline{z_1}$, and $ r$. 
Computing the Gr\"obner basis with respect to 
the box order 
\[
    [u,v]>[z_0,z_1,\overline{z_0},\overline{z_1}, r],
\]
we obtain an elimination ideal generated by polynomials
in variables $ z_0, z_1,\overline{z_0},\overline{z_1},$ and $ r$ .

The calculation using Risa/Asir shows that the elimination ideal
is generated only from the polynomial in the equation \eqref{eq:elim}.
\end{rem}

\begin{thm} \label{hilhyp0115}
For $ 0<z_0<1 $ and $ t >0$, the following hold.
\begin{enumerate}
  \item \label{item:1}
         The largest number $ s $ satisfying
         $$ B_{\rho}(z_0,s) \subset B_h(z_0,t) $$
       is $ s=t $, see  Figure 5.
  \item \label{item:2}
       The smallest number $ s $ satisfying
        $$    B_h(z_0,t) \subset B_{\rho}(z_0,s) $$
       is given by
        $$
           R=\frac{r-1}{\sqrt{(r+1)^2-4rz_0^2}},
        $$
        where $ R={\rm th}\frac{s}2$ and $r= e^t $.
\end{enumerate}
\end{thm}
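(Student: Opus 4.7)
\textbf{Proof plan for Theorem \ref{hilhyp0115}.} The strategy is to exploit the functional identity of Theorem \ref{4.2}, namely $\sh\frac{h_{\mathbb{B}^2}(a,b)}{2} = \sqrt{1-m^2}\,\sh\frac{\rho_{\mathbb{B}^2}(a,b)}{2}$ with $m = d(\{0\}, L[a,b])$. In particular $h_{\mathbb{B}^2}(a,b) \le \rho_{\mathbb{B}^2}(a,b)$ always, with equality precisely when $L[a,b]$ passes through the origin.

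For part \eqref{item:1}, the inclusion $B_\rho(z_0,t) \subset B_h(z_0,t)$ is immediate: any $b$ with $\rho_{\mathbb{B}^2}(z_0,b) < t$ satisfies $h_{\mathbb{B}^2}(z_0,b) \le \rho_{\mathbb{B}^2}(z_0,b) < t$. To establish that $s=t$ is the largest such radius, I would note that by \eqref{hkv420} the hyperbolic ball $B_\rho(z_0,t)$ is a Euclidean disk symmetric about the real axis; its rightmost boundary point $p$ lies on the positive real axis, so $L[z_0,p]$ is the real axis, forcing $m = 0$ and hence $h_{\mathbb{B}^2}(z_0,p) = \rho_{\mathbb{B}^2}(z_0,p) = t$. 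Thus $p \in \partial B_\rho(z_0,t) \cap \partial B_h(z_0,t)$, and any $s > t$ admits a real-axis point just past $p$ that lies in $B_\rho(z_0,s)$ but has Hilbert distance to $z_0$ exceeding $t$, contradicting containment.

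For part \eqref{item:2}, I need to compute $\max\{\rho_{\mathbb{B}^2}(z_0,b) : b \in \partial B_h(z_0,t)\}$. Since $h_{\mathbb{B}^2}(z_0,b) = t$ is constant on this Hilbert circle, Theorem \ref{4.2} gives
\[
  \sh \frac{\rho_{\mathbb{B}^2}(z_0,b)}{2} = \frac{\sh(t/2)}{\sqrt{1-m^2}},
\]
so the extremization reduces to maximizing $m = d(\{0\}, L[z_0,b])$. A straight-line chord through $z_0$ making angle $\theta$ with the real axis has Euclidean distance $z_0|\sin\theta|$ from the origin; since $z_0$ lies inside the Hilbert ellipse of Theorem \ref{fujiEllip}, every direction $\theta \in [0, \pi/2]$ is realized by some point $b \in \partial B_h(z_0,t)$. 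Hence $\max m = z_0$, attained at the vertical chord through $z_0$ (which meets the ellipse at two points $z_0 \pm i y^*$).

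Finally, substituting $m = z_0$ and $\sh^2(t/2) = (r-1)^2/(4r)$ with $r = e^t$ yields
\[
  R^2 = \th^2\frac{s}{2} = \frac{\sh^2(t/2)}{1 - z_0^2 + \sh^2(t/2)} = \frac{(r-1)^2}{(r+1)^2 - 4rz_0^2},
\]
which is the claimed formula. The main obstacle is the conceptual shift in part \eqref{item:2}: recognizing that along the Hilbert circle the hyperbolic distance from $z_0$ is a strictly increasing function of the single parameter $m$, so the extremization becomes the purely Euclidean problem of finding the chord through $z_0$ that is farthest from the origin; once this is done, the ellipse machinery of Section \ref{sec8} is not needed for the computation, only for the existence of the vertical chord inside the Hilbert disk.
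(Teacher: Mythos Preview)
Your proof is correct and takes a genuinely different, more elementary route than the paper's.

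The paper works entirely at the level of defining equations: it writes down the polynomial equation \eqref{eq:hil-equi} of the Hilbert circle from Theorem \ref{fujiEllip} and the polynomial equation of the hyperbolic circle from \eqref{hkv420}, and then uses two successive resultants (computed in Risa/Asir) to locate the values of $R$ for which these two conics are tangent. Factoring the final discriminant $H_2$ produces the two critical radii $(R-1)r+R+1=0$ and $(4rz_0^2-(r+1)^2)R^2+(r-1)^2=0$, which are then interpreted as the inscribed and circumscribed cases. Your argument sidesteps all of this by applying Theorem \ref{4.2} pointwise: on $\partial B_h(z_0,t)$ one has $\sh(\rho/2)=\sh(t/2)/\sqrt{1-m^2}$, so minimizing and maximizing $\rho$ over the Hilbert circle reduces to minimizing and maximizing the single Euclidean parameter $m=d(\{0\},L[z_0,b])=z_0|\sin\theta|$, with the obvious extremes $m=0$ (real axis) and $m=z_0$ (vertical chord). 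This is shorter, avoids computer algebra entirely, and explains \emph{why} the contacts occur where they do, something the resultant calculation does not reveal. The paper's method, by contrast, showcases the Gr\"obner-basis techniques of Section \ref{sec8} and would still apply in settings where no analogue of Theorem \ref{4.2} is available.
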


\begin{proof}
By \eqref{hkv420} the hyperbolic circle $ \partial B_{\rho}(z_0,s) $ coincides with
the Euclidean circle with the radius $ (1-z_0^2)R/(1-z_0^2R^2) $
and the center at $ z_0(1-R^2)/(1-z_0^2R^2) $, where $ R={\rm th}\frac{s}2 .$

Here, we consider the Hilbert circle $ \partial B_h(z_0,t) $ and
the hyperbolic circle $ \partial B_{\rho}(z_0,s) $.
The equations of these curves are given by,
\begin{equation}\label{eq:hil}
  Hil(z)=rz_0^2z^2+((r+1)^2-(r^2+1)z_0^2)z\overline{z}
     +rz_0^2\overline{z}^2-4rz_0z-4rz_0\overline{z}+(r+1)^2z_0^2-(r-1)^2=0,
\end{equation}
and
\begin{equation}\label{eq:hyp}
  Hyp(z)=(R^2z_0^2-1)z\overline{z}-(R^2-1)z_0z-(R^2-1)z_0\overline{z}
     -z_0^2+R^2=0,
\end{equation}
where $ r=e^t $ and $ R={\rm th}\frac{s}2 $.

We need to find the conditions that 
the Hilbert circle and the hyperbolic circle are tangent to each other.

First, eliminating $ \overline{z} $ from 
$ Hil=0 $ and $ Hyp=0 $ by computing the resultant, we obtain
$$
   H_1(z)=\textrm{resultant}_{\overline{z}}(Hil,Hyp)=0.
$$
In fact, the coefficients of the polynomial $ H_1(z)=c_4z^4+c_3z^3+c_2z^2+c_1z+c_0$ are
\begin{align*}
 c_4 & = rz_0^2(R^2z_0^2-1)^2,\\
 c_3 &=-z_0(R^2z_0^2-1)\Big(\big((R^2-1)(r+1)^2+4rR^2\big)z_0^2
       -\big(R^2(r+1)^2-(r-1)^2\big)\Big),\\
 c_2 &= R^2\big((R^2-1)(r^2+1)+2R^2r\big)z_0^6
        +(R^2-1)\big((R^2-2)(r^2+1)+2(4R^2-1)r\big)z_0^4 \\
     & \qquad
        -\big((R^2-1)(2R^2-1)(r^2+1)+2(2R^4+2R^2-1)r\big)z_0^2
        +\big((r+1)^2R^2-(r-1)^2\big),\\
 c_1 &=-\big((R^2-1)(2R^2-1)(r^2+1)+2(2R^4-R^2+1)r\big)z_0^5\\
     & \qquad
      +\big((R^4-1)(r^2+1)-2(R^4-4R^2-1)r\big)z_0^3
      +(R^2-2)\big(R^2(r+1)^2-(r-1)^2\big)z_0,\\
 c_0 &= rz_0^6+\big((R^2-1)^2(r^2+1)+2(R^4-R^2-1)r\big)z_0^4
        -\big((R^2-1)^2(r^2+1)+(R^4-2)r\big)z_0^2.
\end{align*}
Next, to find the condition that the equation $ H_1=0 $ 
has multiple solutions, we compute the resultant again,
$$
  H_2(z)=\textrm{resultant}_{{z}}(H_1,\frac{\partial}{\partial z}H_1)=0.
$$
In fact,
\begin{align*}
  H_2(z) &=R^4(R-1)^2(R+1)^2r(r+1)^4
         z_0^6(z_0-1)^{12}(z_0+1)^{12}
         (Rz_0-1)^4(Rz_0+1)^4 \\
        & \quad
         \times
         \big((R+1)r+R-1\big)^2
         \big((R-1)r+R+1\big)^2
        \Big(\big(4rz_0^2-(r+1)^2\big)R^2+(r-1)^2\Big)^2=0.
\end{align*}
Since $ 0<R<1 $, $ r>1 $ and $ 0<z_0<1 $, 
the last two factors are significant.
\begin{itemize}
  \item In the case that $ (R-1)r+R+1=0 $, we have
      $$
         R=\frac{1-r}{1+r}=\frac{1-e^t}{1+e^t}={\rm th} \frac{t}2.
      $$
      Since $ R={\rm th} \frac{s}{2} $, we have $ s=t $.
      In this case, $ \partial B_{\rho}(z_0,t) $ is
      inscribed in $ \partial B_h(z_0,t) $ and
      we have the assertion of item (\ref{item:1}).  

  \item In the case of $\big(4rz_0^2-(r+1)^2\big)R^2+(r-1)^2=0$, we have
      $$
          R=\frac{r-1}{\sqrt{(r+1)^2-4rz_o^2}},\quad \textrm{where}\quad
                        R={\rm th} \frac{s}2.
      $$
      In this case, $ \partial B_{\rho}(z_0,s) $ is
      circumscribed about $ \partial B_h(z_0,t) $ and 
      we have the assertion of item (\ref{item:2}).  
\end{itemize}
\end{proof}

 It follows from Theorem \ref{4.2} that for $0 < r < s < 1$, we have
\begin{equation}\label{71}
	\rho_{\mathbb{B}^2}(r, s) = h_{\mathbb{B}^2}(r, s)
\end{equation}
and we see that for $z_0 \in (0, 1)$
\begin{equation}\label{72}
	B_\rho(z_0, t) \subset B_h(z_0, t).
\end{equation}
This inclusion relation is illustrated in Figure \ref{fig:fig4}.
	\begin{figure}[H]
	\centering
	\includegraphics[width=0.4\linewidth]{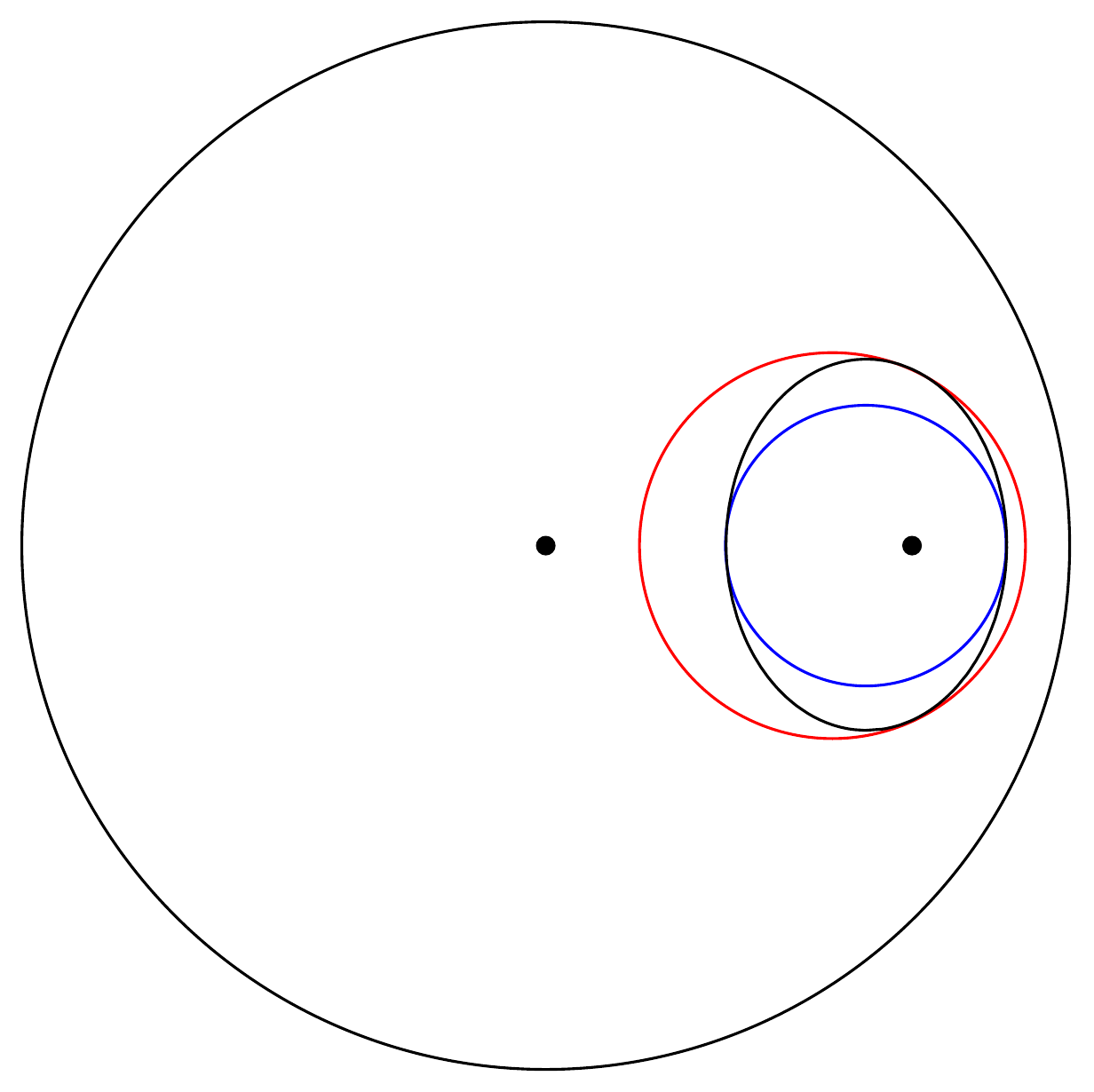}
	\caption{A Hilbert circle and a maximal inscribed hyperbolic circle and
	a minimal circumscribed hyperbolic circle with the same
	center, cf. Theorem \ref{hilhyp0115} for details.}
		\label{fig:fig4}
\end{figure}

In the case of a Euclidean circle, we have for $z_0 \in (0, 1)$, $s \in (0, 1 - |z_0|)$
\begin{equation} \label{73}
	B^2(z_0, s) \subset B_h(z_0, \rho_{\mathbb{B}^2}(z_0, z_0 + s))
\end{equation}
and this is again sharp.

\begin{figure}[H] 
	\centering
	\begin{minipage}{0.38\textwidth}
		\centering
		\includegraphics[width=\linewidth]{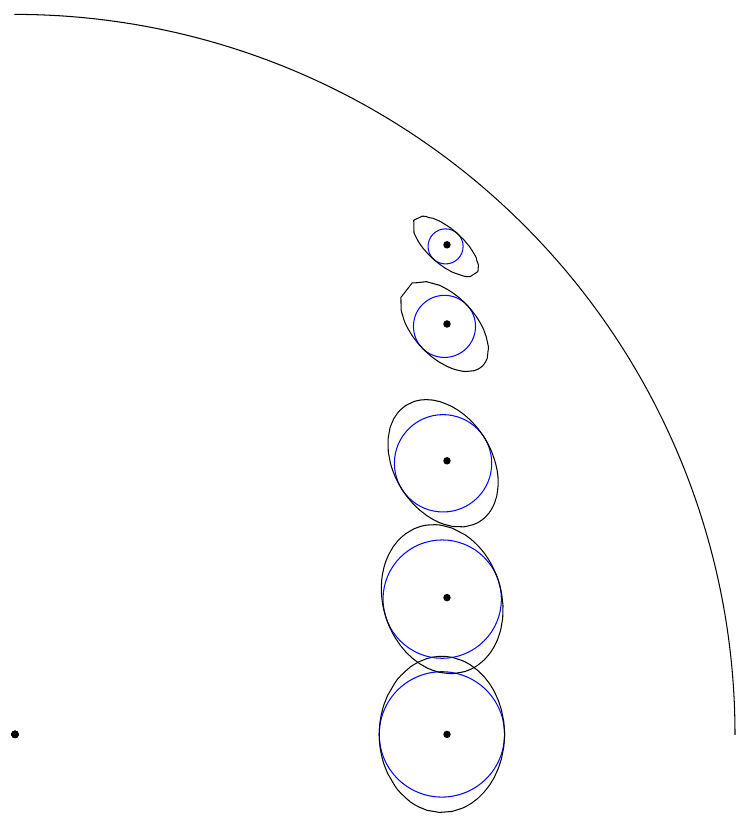}
		\caption{\textbf{(A)}}
		\label{fig:fig12}
	\end{minipage}
	\hfill
	\begin{minipage}{0.6\textwidth}
		\centering
		\includegraphics[width=\linewidth]{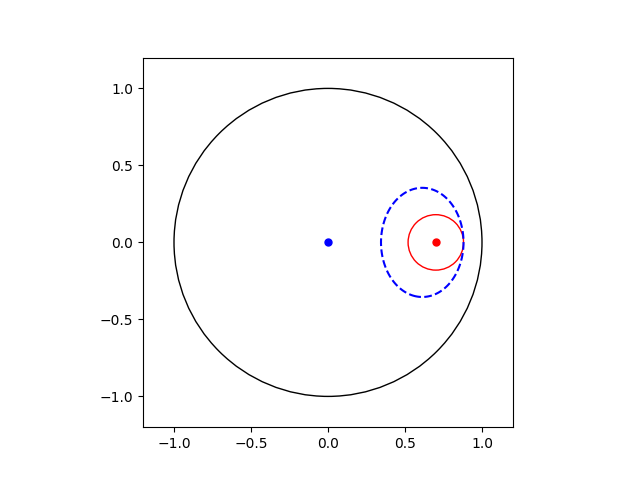}
	\caption{\textbf{(B)}}
		\label{fig5}
	\end{minipage}
\flushleft
	 \textbf{(A)} Some Hilbert circles with centers $c_j, j=1,2,..$ on the segment $\{z  : {\rm Re}\{z\}=0.6, \quad 0 \le {\rm Im}\{z\}<\sqrt{1- 0.6^2} \} $ with
		radii $h_{\mathbb{B}^2}(0.6, 0.68)$ and the corresponding maximal inscribed hyperbolic circles
		with the same centers  $c_j, j=1,2,..$ and the same radii  $\rho_{\mathbb{B}^2}(0.6, 0.68)$.
		Observe that the Hilbert circles became more flat when the centers approach
		the boundary of the unit circle. \\
		\textbf{(B)} A Hilbert circle and a maximal inscribed Euclidean circle with the same
			center.
\end{figure}

\begin{nonsec}{\bf Hilbert midpoint.} For $a,b \in \mathbb{B}^2,$ the 
{\it Hilbert midpoint} is a point of the form  $c= (1-t)a+ t b, \quad 0 < t <1,$ 
with $h_{ \mathbb{B}^2}(a,c) = h_{ \mathbb{B}^2}(c,b).$
\end{nonsec}

\begin{thm}
 For $a,b \in \mathbb{B}^2, a\neq b,$ the 
{\it Hilbert midpoint} is
\[
c= (1-t)a+ t b, \quad \displaystyle t= \frac{-(1-|a|^2) + \sqrt{(1-|a|^2)(1-|b|^2)}}{|a|^2- |b|^2} \,.
\]
Moreover, $\rho_{\mathbb{B}^2}(a,c)=\rho_{\mathbb{B}^2}(c,b) \,. $
\end{thm}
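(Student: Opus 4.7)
The plan is to exploit the collinearity of $a$, $c$, $b$ on the line $L[a,b]$ to reduce the Hilbert-midpoint condition to a hyperbolic-midpoint condition via Theorem \ref{4.2}, and then solve a single linear equation in $t$. The key observation is that the three pairs $(a,c)$, $(c,b)$, $(a,b)$ share the same chord, so the common perpendicular distance $m = d(\{0\}, L[a,b])$ appears with the same value in the corresponding applications of Theorem \ref{4.2}.

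First I would note that, since $\sh(h_{\mathbb{B}^2}(x,y)/2) = \sqrt{1-m^2}\, \sh(\rho_{\mathbb{B}^2}(x,y)/2)$ with the same $m$ for $(x,y) \in \{(a,c),(c,b)\}$, the equation $h_{\mathbb{B}^2}(a,c) = h_{\mathbb{B}^2}(c,b)$ is equivalent to $\rho_{\mathbb{B}^2}(a,c) = \rho_{\mathbb{B}^2}(c,b)$. This takes care of the \emph{moreover} statement at once and reduces the remaining task to locating the hyperbolic midpoint on the Euclidean segment $[a,b]$. Writing $c = (1-t)a + tb$ with $t \in (0,1)$ gives $|a-c| = t|a-b|$ and $|c-b| = (1-t)|a-b|$. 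Substituting these into formula \eqref{rhoB} and equating the two sinh expressions, the common factors $\sqrt{1-|c|^2}$ and $|a-b|$ cancel, and what remains is the linear equation
\[
  t\sqrt{1-|b|^2} = (1-t)\sqrt{1-|a|^2},
\]
whose unique solution is $t = \sqrt{1-|a|^2}/(\sqrt{1-|a|^2} + \sqrt{1-|b|^2})$.

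To finish, I would rationalize this expression by multiplying both numerator and denominator by $\sqrt{1-|a|^2} - \sqrt{1-|b|^2}$, which produces $|b|^2 - |a|^2$ in the denominator and, after rearranging signs, the stated formula. The rationalization is valid whenever $|a|^2 \neq |b|^2$; the case $|a| = |b|$ gives $t = 1/2$ directly from the linear equation and corresponds to the removable singularity of the stated expression. There is essentially no obstacle to surmount here: once Theorem \ref{4.2} converts the problem to the hyperbolic setting, the argument is a few lines of elementary algebra.
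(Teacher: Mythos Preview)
Your argument is correct and is, in fact, cleaner than the paper's own proof. The paper works directly from the cross-ratio definition of $h_{\mathbb{B}^2}$: it writes the midpoint condition as
\[
(c-u)^2(v-a)(v-b) = (v-c)^2(a-u)(b-u),
\]
expresses $u+v$ and $uv$ via the coefficients of the quadratic whose roots are the boundary points $u,v$, and then eliminates $u,v$ (the elimination step is stated but not displayed). The ``moreover'' claim about $\rho_{\mathbb{B}^2}$ is dispatched in a single sentence at the end.

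Your route is genuinely different: by invoking Theorem~\ref{4.2} with the common value of $m$ along the chord, you convert the Hilbert-midpoint condition into the hyperbolic-midpoint condition \emph{before} doing any computation. This simultaneously proves the ``moreover'' statement and reduces the remaining work to the single linear equation $t\sqrt{1-|b|^2}=(1-t)\sqrt{1-|a|^2}$ coming from \eqref{rhoB}, after which the rationalization you describe recovers the stated formula. What you gain is transparency and brevity; what the paper's approach gains is independence from Theorem~\ref{4.2} (it uses only the raw definition of the Hilbert metric). Your handling of the degenerate case $|a|=|b|$ as a removable singularity is also a nice touch that the paper does not address.
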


\begin{proof}
Let $u$ and $v$ be the points of intersection of the line $L[a,b]$ with the unit 
circle ordered in such a way that $|u-a|<|u-b|\,.$ By the definition of 
the midpoint  $h_{ \mathbb{B}^2}(a,c) = h_{ \mathbb{B}^2}(c,b)$ or, in other words,
\begin{equation*} 
\displaystyle
\frac{(c-u)(v-a)}{(a-u)(c-v)} = \frac{(b-u)(v-c)}{(c-u)(b-v)} \,.
\end{equation*}
Then
\begin{equation} \label{my1}
(c-u)^2(v-a)(v-b)= (v-c)^2(a-u)(b-u) \,.
\end{equation}
Hence $u$ and $v$ are the solution of
\[
(\overline{a} - \overline{b})z^2 - (\overline{a} b - a \overline{b}) z - (a-b) = 0.
\]
Therefore,
\begin{equation} \label{my3}
u+v = \overline{a}b - a \overline{b}\,,\quad {\rm and} \,\, \quad uv = - \frac{a-b}{\overline{a}-\overline{b}} \,.
\end{equation}
Eliminating $u, v$ from \eqref{my1}, we have
the desired value of $t\,.$

The claim about the hyperbolic metric follows readily from the definition of the Hilbert metric.
\end{proof}

\medskip
There are also geometric methods to construct the Hilbert midpoint. We refer the reader
to \cite[Fig. 10]{vw}.

\begin{lem} \label{}
For $x,y \in \mathbb{B}^2$
\[
|x-y| \le 2\, {\rm th} \frac{h_{ \mathbb{B}^2}(x,y)}{4}
\,.
\]
Equality holds if $x= -y\,.$
\end{lem}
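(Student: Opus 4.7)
The plan is a one-dimensional reduction on the chord $L[x,y]\cap\mathbb{B}^2$. Let $z_0$ be the foot of the perpendicular from $0$ to $L[x,y]$, so $|z_0|=m$; set $L:=\sqrt{1-m^2}$ and pick a unit tangent $\hat{t}$ so that $x=z_0+\alpha\hat{t}$ and $y=z_0+\beta\hat{t}$ with $-L<\alpha\le\beta<L$. The chord endpoints are then $u=z_0-L\hat{t}$ and $v=z_0+L\hat{t}$, and the four Euclidean distances entering the cross-ratio simplify to $L\pm\alpha$ and $L\pm\beta$. Substituting into the definition of the Hilbert metric yields
\[
  h_{\mathbb{B}^2}(x,y)=\log\frac{(L+\beta)(L-\alpha)}{(L+\alpha)(L-\beta)}=2\arth(\beta/L)-2\arth(\alpha/L),
\]
while evidently $|x-y|=\beta-\alpha$.

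The second step uses the subtraction formula for $\arth$ together with the half-angle identity $\th(\tfrac{1}{2}\arth u)=u/(1+\sqrt{1-u^2})$ to extract the closed form
\[
  \th\frac{h_{\mathbb{B}^2}(x,y)}{4}
  =\frac{L(\beta-\alpha)}{(L^2-\alpha\beta)+\sqrt{(L^2-\alpha^2)(L^2-\beta^2)}},
\]
the only algebraic input being the identity $(L^2-\alpha\beta)^2-L^2(\beta-\alpha)^2=(L^2-\alpha^2)(L^2-\beta^2)$. Consequently, the desired bound $|x-y|\le 2\th\frac{h_{\mathbb{B}^2}(x,y)}{4}$ is equivalent to the scalar inequality
\[
   (L^2-\alpha\beta)+\sqrt{(L^2-\alpha^2)(L^2-\beta^2)}\le 2L.
\]

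The substantive step is dispatching this last inequality. Substituting $\alpha=L\sin\phi$, $\beta=L\sin\psi$ with $\phi,\psi\in(-\pi/2,\pi/2)$, the cosine addition formula collapses the left-hand side to $L^2(1+\cos(\phi+\psi))$, so it suffices to verify $L(1+\cos(\phi+\psi))\le 2$, which is immediate from $L\le 1$ and $1+\cos\le 2$. Equality forces both $L=1$ (equivalently $m=0$, so $L[x,y]$ passes through the origin) and $\phi+\psi=0$ (equivalently $\alpha=-\beta$); together these give $x=-y$, matching the stated equality case. The main potential obstacle is the algebraic bookkeeping in step two, but once the trigonometric substitution is introduced the inequality collapses cleanly and the rest is elementary.
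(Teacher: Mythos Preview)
Your argument is correct. The chord parameterization and the computation of $\th(h_{\mathbb{B}^2}(x,y)/4)$ are accurate, and the trigonometric substitution $\alpha=L\sin\phi$, $\beta=L\sin\psi$ does indeed collapse the scalar inequality to $L(1+\cos(\phi+\psi))\le 2$, which is immediate. Your analysis of the equality case is also sound (and in fact gives the ``only if'' direction, which is slightly more than the lemma asserts).

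Your route, however, differs substantially from the paper's. The paper argues geometrically: it takes the Hilbert midpoint $z$ of $x$ and $y$, sets $t=h_{\mathbb{B}^2}(x,y)/2$ so that $x,y\in\partial B_h(z,t)$, and then invokes Theorem~\ref{fujiEllip} and Remark~\ref{semiaxes} to see that the Euclidean diameter of the Hilbert disk $B_h(z,t)$ is a decreasing function of $|z|$. This yields $|x-y|\le d(B_h(z,t))\le d(B_h(0,t))=2\th(t/2)$. The paper's proof thus rests on the structural result that Hilbert circles are ellipses (obtained via Gr\"obner basis computations) together with the explicit semi-axis formulas. Your proof bypasses all of this: it is entirely self-contained, uses nothing beyond the definition of $h_{\mathbb{B}^2}$ and elementary identities, and in particular does not require the ellipse theorem or the Hilbert midpoint construction. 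The trade-off is that the paper's approach gives a conceptual picture (Hilbert disks shrink as the center moves outward) while yours is more direct and more portable.
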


\begin{proof} Let $z$ be the Hilbert midpoint of $x$ and $y$ and $t = h_{ \mathbb{B}^2}(x,y)/2\,.$ Then $x,y \in \partial B_h(z,t).$ From the formula \ref{semiaxes} for the major semi-axis of the ellipse $ \partial B_h(z,t)$ we see that $ d( B_h(z,t))$ decreases as a function of $|z|$ and hence
\[
|x-y| \le d( B_h(z,t)) \le d( B_h(0,t)).
\]
Because $B_h(0,t) = B_{\rho}(0,t)= B^2(0, {\rm th } (t/2))$ by Theorem \ref{4.2} and \eqref{hkv420}, we have 
\[
|x-y| \le 2\, {\rm th} \frac{h_{ \mathbb{B}^2}(x,y)}{4}
\,.
\]
The sharpness follows from the formula for the hyperbolic metric.

\end{proof}

\medskip
Clearly, convex domains are preserved under affine mappings.
It is well-known that the Hilbert metric is preserved under affine mappings.
According to Theorem \ref{fujiEllip} we may conclude that Hilbert disks in ellipses
are also ellipses.

\vspace{0.5cm}


\begin{thebibliography}{HIMPS}






\bibitem[B]{b} {\sc  A.\,F. Beardon}, The Geometry of Discrete Groups,
 Springer-Verlag, New York, 1983.
 \bibitem[B2]{b2} {\sc  A.\,F. Beardon},  The Klein, Hilbert, and Poincar\'e 
 metrics of a domain, J. Comput. Appl. Math., 105 (1999), 155-162.

 

\bibitem[DHV]{dhv} {\sc O. Dovgoshey, P. Hariri, and  M. Vuorinen},
{Comparison theorems for hyperbolic type metrics}, {Complex Var. Elliptic Equ.}
61,  11, (2016), 1464--1480.
 


 
 \bibitem[FKV]{fkv} {\sc M. Fujimura, R. Kargar, and M.  Vuorinen, } Formulas 
for the visual angle metric,  J. Geom. Anal. {arXiv:2304.04485}.
\bibitem[FRV]{frv}    {\sc M. Fujimura, O. Rainio, and  M. Vuorinen},
 {Collinearity of points on Poincar\'e unit disk and Riemann sphere},
  Publ.  Math. Debrecen 105 (2024) 1-2, 141--169, {arXiv:2212.09037}. 

\bibitem[GH]{gh}  {\sc F.W. Gehring and K. Hag},  The Ubiquitous Quasidisk.
With contributions by Ole Jacob Broch.
American Mathematical Society, Providence, RI, 2012. xii+171 pp.



\bibitem[HKV]{hkv}  {\sc P. Hariri, R. Kl\'en, and M. Vuorinen},
Conformally Invariant Metrics  and Quasiconformal Mappings,
Springer Monographs in Mathematics, Springer, Berlin, 2020.


\bibitem[HIMPS]{ha}{\sc P.A.  H\"ast\"o, Z. Ibragimov, D. Minda,
S. Ponnusamy, and S. Sahoo,} Isometries of some hyperbolic-type path metrics, 
and the hyperbolic medial axis.
(English summary) In the tradition of Ahlfors-Bers. IV, 63--74,
Contemp. Math., 432, Amer. Math. Soc., Providence, RI, 2007.



\bibitem[H]{h}{\sc J. Heinonen,} {Lectures on Analysis on Metric Spaces.}
 Springer-Verlag, New York, 2001.


\bibitem[HILB]{hilb}{\sc D. Hilbert,} Ueber die gerade Linie als 
k\"urzeste Verbindung zweier Punkte, Math. Ann. 46 (1895),  91-96.



\bibitem[LV]{lv}{\sc O. Lehto and K. I. Virtanen,} Quasiconformal mappings in the plane. Second edition. Translated from the German by K. W. Lucas. Die Grundlehren der mathematischen Wissenschaften, Band 126. Springer--Verlag, New York--Heidelberg, 1973.
 
 \bibitem[N]{n}{\sc M. Noro,} A computer algebra system Risa/Asir.  


\bibitem[P]{p} {\sc A. Papadopoulos, }  Metric spaces, convexity and non-positive curvature. Second edition. IRMA Lectures in Mathematics and Theoretical Physics,
 6. European Mathematical Society (EMS), Z\"{u}rich, 2014. xii+309 pp.
\bibitem[PT]{pt}{\sc A.  Papadopoulos and M. Troyanov,} 
From Funk to Hilbert geometry. Handbook of Hilbert geometry, 33--67, 
IRMA Lect. Math. Theor. Phys., 22, Eur. Math. Soc., Z\"urich, 2014.
\bibitem[PY1]{py1}{\sc A. Papadopoulos and S. Yamada,} 
The Funk and Hilbert geometries for spaces of constant curvature. 
Monatsh. Math. 172 (2013), no. 1, 97--120.

\bibitem[PY2]{py2}{\sc A. Papadopoulos and S. Yamada,} Funk and Hilbert 
geometries in spaces of constant curvature. Handbook of Hilbert geometry, 
353--379, IRMA Lect. Math. Theor. Phys., 22, Eur. Math. Soc., Z\"urich, 2014. 
%
  
\bibitem[RV]{rv}{\sc O. Rainio and M.  Vuorinen,} Hilbert metric in the unit ball, 
Studia Sci. Math. Hungar.  60(2-3), 2023, 175-191.

\bibitem[VW]{vw}{\sc  M. Vuorinen and G. Wang,} Bisection of geodesic segments 
in hyperbolic geometry, Complex Analysis  and Dynamical Systems V, Contemp. Math., Israel Math. Conf. Proc., Amer. Math. Soc., Providence, RI. 591, 2013, 273--290.








\end{thebibliography}
\end{document}